\definecolor{r}{RGB}{0.9,0.3,0.1}
\definecolor{b}{rgb}{0.1,0.3,0.9}
\newtheorem{theo}{Theorem}[section]
\newtheorem{defin}[theo]{Definition}
\newtheorem{prop}[theo]{Proposition}
\newtheorem{lemm}[theo]{Lemma}
\newtheorem{rem}[theo]{Remark}
\numberwithin{equation}{section}
\newcommand{\al}{\alpha}
\newcommand{\be}{\beta}
\newcommand{\Ga}{\Gamma}
\newcommand{\la}{\lambda}
\newcommand{\om}{\omega}
\newcommand{\Om}{\Omega}
\newcommand{\si}{\sigma}
\newcommand{\ep}{\epsilon }
\newcommand{\te}{\theta}
\newcommand{\De}{\Delta}
\newcommand{\de}{\delta}
\newcommand{\pa}{\partial}
\newcommand{\R}{{\mathbb R}^n}
\newcommand{\ri}{\rightarrow}
\newcommand{\Rn}{{\mathbb R}^{n-1}}
\newcommand{\na}{\nabla}
\begin{document}

\title[Navier--Stokes equations with nonhomogeneous data]{Initial-Boundary value problem of the Navier-Stokes equations in the half space
with nonhomogeneous data}

\
\author{Tongkeun Chang}
\address{Department of Mathematics, Yonsei University \\
Seoul, 136-701, South Korea}
\email{chang7357@yonsei.ac.kr}

\author{Bum Ja Jin}
\address{Department of Mathematics, Mokpo National University, Muan-gun 534-729,  South Korea }
\email{bumjajin@mokpo.ac.kr}

\thanks{}

\begin{abstract}
This paper discusses the  solvability (global in time) of  the initial-boundary value problem of the Navier-stokes equations in the half space when
the initial data $ h\in  \dot{ B}_{q \si}^{\al-\frac{2}{q}}(\R_+)$
and the boundary data
 $ g\in  \dot{ B}_q^{\al-\frac{1}{q},\frac{\al}{2}-\frac{1}{2q}}(\Rn\times {\mathbb R}_+) $  with $g_n\in    \dot B^{\frac12 \al}_q ({\mathbb R}_+; \dot B^{-\frac1q}_q ({\mathbb
R}^{n-1}))\cap L^q({\mathbb R}_+;\dot{B}^{\al-\frac{1}{q}}(\Rn))$, for any $0<\al<2$ and $q =\frac{n+2}{\al+1}$. Compatibility condition \eqref{compatibility1} is required for  $h$ and $g$.\\

\noindent
 2000  {\em Mathematics Subject Classification:}  primary 35K61, secondary 76D07. \\

\noindent {\it Keywords and phrases: Stokes equations, Navier-Stokes
equations, Initial-boundary value problem, Homogeneous anisotropic  Besov space. }

\end{abstract}

\maketitle

\section{\bf Introduction}
\setcounter{equation}{0}

Let $\R_+ = \{ x \in \R \, | \, x_n > 0 \}$, $n\geq 2$.
In this study, we consider the following nonstationary Navier--Stokes equations
\begin{align}\label{maineq2}
\begin{array}{l}\vspace{2mm}
u_t - \De u + \na p =-{\rm div}(u\otimes u), \qquad {\rm div} \, u =0 \mbox{ in }
 \R_+\times (0, \infty),\\
\hspace{30mm}u|_{t=0}= h, \qquad  u|_{x_n =0} = g,
\end{array}
\end{align}
where
 $u=(u_1,\cdots, u_n)$ and $p$ are the unknown velocity and pressure, respectively,
   $     h=(h_1,\cdots, h_n)$ is the given initial data, and $  g=(g_1,\cdots, g_n)$ is the given boundary data.

Abundant literature exists on Navier--Stokes equations with homogeneous boundary data ($g =0$)
(See  \cite{amann,cannone,giga2,giga3,giga1,kozono1,sol1} and the references therein). 

Further, over the past decade, many mathematicians have focused on studying Navier--Stokes equations with nonhomogeneous boundary data ($g \neq 0$)
%
(See \cite{fernandes,amann1, amann2, CJ1, chang-jin, CJ2, farwig4,farwig6, farwig2, farwig3,grubb1, grubb2, grubb3, grubb,lewis,R, voss} and the references therein). 

The study closely relating to our present study is that by G. Grubb\cite{grubb3}, who used pseudo-differential operator techniques to realize the local in time existence of solution $u\in B^{\al,\frac{\al}{2}}_{q}(\Omega \times (0,T))$, $\infty>q>\frac{n+2}{\al+1}$ with $\al q>2$ in the interior or exterior domains, when $h\in \mbox{\r{B}}^{\al-\frac{2}{q}}_q(\Omega)$ and $g\in B^{\al-\frac{1}{q},\frac{\al}{2}-\frac{1}{2q}}_{q0}(\partial \Omega \times (0,T))$
 with $g_n=0$ (When $h=0$, the result in \cite{grubb3} was given up to the case $\al q>1$(and $\infty>q>\frac{n+2}{\al+1})$. See also \cite{grubb1,grubb2,grubb,sol1}). Here, let $\mbox{\r{B}}^s_q(S)$ ($S\subset {\mathbb R}^m$) be the set of distributions in Besov space $B^s_q({\mathbb R}^m)$ supported in $\bar{S}$, and $B^{s,\frac{s}{2}}_{q0}(S\times (0, T))$ be the set of distributions in anisotropic Besov space $B^{s,\frac{s}{2}}_{q}({\mathbb R}^m\times (-\infty,T])$ supported in $\bar{S} \times [0,T]$.

 In Refs. \cite{amann1,amann2,farwig4, farwig6,farwig2,farwig3}, rough initial and boundary data were considered for the local data in the time existence of weak or very weak solutions.
  In Refs. \cite{fernandes,lewis,voss}, a mild-type solution was considered in the half space when the rough initial and boundary data are given.
Recently, Chang and Jin \cite{chang-jin} studied the local in time solvability of Navier--Stokes equations when $h\in B^{-\frac{2}{q}}_q(\R_+)$ and $g\in B^{-\frac{1}{q},-\frac{1}{2q}}_q(\Rn\times {\mathbb R}_+)$, $q>n+2$. By the same authors  \cite{CJ2}, this result is extended  to  global time existence with small initial and boundary data.

This study aims to extend the result of \cite{grubb3} to any $h\in  \dot B^{\al-\frac{2}{q}}_{q \si}(\R_+)$ and $g\in \dot B^{\al-\frac{1}{q},\frac{\al}{2}-\frac{1}{2q}}_q(\Rn\times (0,\infty))$ with $g_n\in  B^{\frac{\al}{2}}_q(0,\infty;\dot{B}^{-\frac{1}{q}}_q(\Rn))\cap L^q(0,\infty;\dot{B}^{\al-\frac{1}{q}}(\Rn))$, where $q=\frac{n+2}{\al+1}$ and $0<\al<2$.
 For $\al > \frac3q$, the following compatibility condition is required:
 \begin{align}\label{compatibility}
 g|_{t =0} = h|_{x_n=0} \quad \mbox{on} \quad \Rn.
 \end{align}
The compatibility \eqref{compatibility} can be generalized to  any $\al>0$ and $q>1$ as follows:\\
\begin{equation}
\label{compatibility1}
g-\Gamma_t *\tilde{h}|_{x_n=0} \in
\dot { B}^{\alpha-\frac{1}{q},\frac{\alpha}{2}-\frac{1}{2q}}_{q(0)}(\Rn\times (0,\infty)),
\end{equation}
where $B^{s,\frac{s}{2}}_{q(0)}(\Rn\times (0,\infty))$ is  the completion of $C^\infty_0(\Rn\times (0,\infty))$ in ${B}^{s,\frac{s}{2}}_{q}(\Rn\times (0,\infty))$,
 $\tilde{h}\in \dot B^{\al-\frac{2}{q}}_{q\sigma}(\R)$ is some solenoidal extension of $h$ to $\R$  with
$\|\tilde{h}\|_{ \dot B^{\al-\frac{2}{q}}_{q}(\R)} \approx \|h\|_{\dot B^{\al-\frac{2}{q}}_{q}(\R_+)}$
and $\Gamma_t * f|_{x_n=0}:=\int_{\R}\Gamma(x'-y',y_n,t)f(y)dy.$
(According to Lemma \ref{proheat1}, $\Gamma_t * f|_{x_n=0}\in \dot { B}^{\alpha-\frac{1}{q},\frac{\alpha}{2}-\frac{1}{2q}}_{q}(\Rn\times (0,\infty)) $ for any $\alpha>0, \, q>1$ when  $f\in \dot B^{\al-\frac{2}{q}}_{q\sigma}(\R)$.)

The following text states our main result.
\begin{theo}
\label{thm3}
Let $0 <\al <2$ and $1 < q=\frac{n+2}{\al+1} < \infty$. Further, let $h\in \dot  { B}_{q\si}^{\al-\frac{2}{q}}(\R_+)$ and $g \in  \dot{B}^{\al -\frac1q, \frac{\al}2 - \frac1{2q}}_q (\Rn \times {\mathbb R}_+)$ with $g_n \in    \dot{B}^{\frac{\al}2 }_q (0,\infty; \dot B^{-\frac1q}_q ({\mathbb
R}^{n-1}))\cap  L^q(0,\infty;\dot{B}^{\al-\frac{1}{q}}_q(\Rn))$. In addition,
 we assume that $(h,g)$  satisfies the generalized compatibility condition \eqref{compatibility1}. 
Then, there exists $\ep^* > 0$ such that if
 \begin{align*}
 \|h\|_{   \dot{ B}^{\al-\frac{2}{q}}_{q }({\mathbb
R}^{n}_+)}+
\|g\|_{ \dot{ B}^{\al-\frac{1}{q},\frac\al 2-\frac{1}{2q}}_{q}({\mathbb
R}^{n-1} \times (0,\infty))}
 +\|g_n\|_{   \dot{B}^{\frac{\al}2 }_q (0,\infty; \dot B^{-\frac1q}_q ({\mathbb
R}^{n-1}))}\\
+\|g_n\|_{ L^q(0,\infty;\dot{B}^{\al-\frac{1}{q}}_q(\Rn))} \leq \ep^*,
\end{align*}
 then the   \eqref{maineq2} has  a unique weak solution
$u\in \dot B^{\al,\frac\al 2 }_{q}({\mathbb R}^n_+\times (0,\infty))$.
\end{theo}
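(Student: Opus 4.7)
The strategy is a linear-plus-nonlinear decomposition followed by a Banach contraction in the critical Besov class, with the scaling $q = \frac{n+2}{\al+1}$ ensuring the fixed-point iteration closes under smallness of the data.

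First, I would construct a linear solution $U$ to the Stokes system
\[
U_t - \De U + \na P = 0, \quad {\rm div}\, U = 0 \text{ in } \R_+ \times (0,\infty), \qquad U|_{t=0} = h, \qquad U|_{x_n=0} = g,
\]
in the class $X := \dot B^{\al,\al/2}_q(\R_+ \times (0,\infty))$, with norm controlled by the right-hand side of the smallness hypothesis. Existence and the estimate come from the linear theory developed earlier in the paper, with Lemma~\ref{proheat1} supplying the trace bound $\Ga_t * \ti h|_{x_n=0} \in \dot B^{\al-\frac{1}{q},\frac{\al}{2}-\frac{1}{2q}}_q$. The generalized compatibility condition \eqref{compatibility1} is precisely what is needed so that the boundary datum $g$ can be attained by a linear Stokes flow starting from a solenoidal extension of $h$.

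Next, write $u = U + v$, so that $v$ should satisfy the zero-data Stokes system
\[
v_t - \De v + \na \pi = -{\rm div}\bigl((U+v)\otimes (U+v)\bigr), \quad {\rm div}\, v = 0, \qquad v|_{t=0} = 0, \qquad v|_{x_n=0} = 0.
\]
Let $\mathcal{N}$ denote the solution operator $F \mapsto v$ for this zero-data Stokes problem, and set $\Phi(v) := \mathcal{N}\bigl(-{\rm div}((U+v)\otimes (U+v))\bigr)$. I would run a Banach fixed-point argument for $\Phi$ on the closed ball of radius $R = 2C\ep^*$ in $X$. The analytic core is a scale-invariant bilinear estimate
\[
\|\mathcal{N}({\rm div}(w_1 \otimes w_2))\|_X \leq C \|w_1\|_X \|w_2\|_X \qquad \text{for all } w_1,w_2 \in X,
\]
which I would establish by combining (i) a product estimate $\|w_1 \otimes w_2\|_Y \leq C\|w_1\|_X \|w_2\|_X$ in an appropriate anisotropic Besov space $Y$, where the critical scaling $\al - \tfrac{n}{q} = 1 - \tfrac{2}{q}$ is essential, with (ii) maximal regularity for the Stokes system with zero data and divergence-form source in $Y$. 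Given this, a standard polarization shows $\Phi$ is a contraction on the ball of radius $2C\ep^*$ once $\ep^*$ is small enough, and Banach fixed point produces a unique $v \in X$, whence $u = U + v \in X$ is the sought weak solution.

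The main obstacle I anticipate is establishing the bilinear estimate in the half-space setting. The Helmholtz projection is nonlocal and does not preserve support, so one must carefully treat the boundary behaviour of $U\otimes v$ and $v\otimes v$, likely by splitting the pressure into a harmonic part carrying the boundary corrections and a bulk Newtonian part, so that $\mathcal{N}(-{\rm div}(\,\cdot\,))$ actually has zero Dirichlet trace and lies in $X$. Because the bilinear estimate is scale-invariant, no smallness is available from shrinking the time window, so the smallness hypothesis on the data in the statement is intrinsic rather than technical.
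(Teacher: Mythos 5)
Your existence argument is, at its core, the same as the paper's: both rest on the linear solvability of the nonhomogeneous Stokes problem in the critical class $\dot B^{\al,\al/2}_q(\R_+\times(0,\infty))$ (Theorem \ref{thm-stokes}, whose proof via the decomposition $u=v+V+\na\phi+w$ is exactly where your anticipated ``pressure splitting / boundary correction'' obstacle is handled), together with a scale-invariant product estimate $\|u\otimes u\|_{\dot B^{\be,\be/2}_p}\le c\|u\|^2_{\dot B^{\al,\al/2}_q}$ obtained from the H\"older-type inequality (Proposition \ref{bilinear1}) and the embedding $\dot B^{\al,\al/2}_q\hookrightarrow L^{n+2}$. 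The only cosmetic difference is that the paper runs a successive-approximation scheme in which every iterate $u^{m+1}$ carries the full data $(h,g)$ and the force $-\mathrm{div}(u^m\otimes u^m)$, whereas you peel off the linear part once and contract on the zero-data perturbation; the differences $u^{m+1}-u^m$ in the paper satisfy exactly your zero-data problem, so the two schemes require identical estimates and both close under the smallness hypothesis.

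There is, however, a genuine gap on the uniqueness side. The theorem asserts uniqueness of the weak solution in the whole class $\dot B^{\al,\al/2}_q(\R_+\times(0,\infty))$, while your Banach fixed point only yields uniqueness within the ball of radius $2C\ep^*$; a competing solution of the same size as the data but outside that ball is not excluded by the contraction. The paper closes this with a separate argument: any two solutions $u,u_1$ lie in $L^{n+2}(\R_+\times{\mathbb R}_+)$ by the critical embedding, their difference solves a zero-data Stokes system with force $-\mathrm{div}(u\otimes(u-u_1)+(u-u_1)\otimes u_1)$, and an $L^{n+2}$--$L^{(n+2)/2}$ estimate for that system gives $\|u-u_1\|_{L^{n+2}}\le c(\|u\|_{L^{n+2}}+\|u_1\|_{L^{n+2}})\|u-u_1\|_{L^{n+2}}$ on each time slab; since the global $L^{n+2}$ norms are finite, one can partition $(0,\infty)$ into finitely many slabs on which the prefactor is less than one and conclude $u\equiv u_1$ slab by slab. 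You would need to add an argument of this type (or restrict the uniqueness claim to the small ball) for your proof to match the statement.
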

Section \ref{notation} further explains the spaces and notations.

Note that as the nonstationary Navier--Stokes equations is invariant under the scaling,
\begin{align*}
u_\la(x,t) = \la u(\la x, \la^2 t),\quad   p_\la (x,t) = \la^2 p(\la x, \la^2t), \\
h_{\la} = \la h (\la x),  \quad  g_\la(x,t) = \la g(\la x, \la^2 t), \,\, \la > 0,
\end{align*}
significantly considering \eqref{maineq2} in the so-called critical spaces, i.e., the function space invariant under the scaling $u(\cdot ) \ri
\la u(\la λ\cdot )$ is very important. In Theorem \ref{thm3}, the function spaces containing solutions are critical spaces.


For the proof of Theorem \ref{thm3}, it is necessary to study the initial-boundary value problem of the Stokes equations in $\R_+\times (0,\infty)$ as follows:
\begin{align}\label{maineq-stokes}
\begin{array}{l}\vspace{2mm}
u_t - \De u + \na p =f, \qquad {\rm div} \, u =0 \mbox{ in }
 \R_+\times (0,\infty),\\
\hspace{30mm}u|_{t=0}= h, \qquad  u|_{x_n =0} = g.
\end{array}
\end{align}

Various studies have been conducted on the solvability of the Stokes equations \eqref{maineq-stokes}
with homogeneous or nonhomogeneous boundary data. Refs. \cite{cannone,giga,giga1,KS2,kozono1,sol1,Sol-2} and the references can be referred to for Stokes problems with homogeneous boundary data, whereas Refs.~\cite{grubb1, grubb2, grubb3,grubb,KS1,KS2,sol1,Sol-1, raymond1} and the references therein can be referred to for Stokes problems with nonhomogeneous boundary data.

Koch and Solonnikov~\cite{KS1} showed
 the unique local in time existence of solution $u \in W^{1,\frac12}_{p}( \Om
\times (0,T))$  of Stokes equations in a bounded  convex domain $\Omega$ with $C^2$ boundaries 
when $f=\mbox{div}{\mathcal F}$, ${\mathcal F}\in L^p(\Om \times (0,T))$, $h=0$, and $g\in  W^{1-\frac{1}{p},\frac{1}{2}-\frac{1}{2p}}_{p0}(\pa \Om
\times (0,T))$ with  $g_n=\mbox{div}{\mathcal A}$ and ${\mathcal A}\in W^{\frac{1}{2}}_{p0}(0,T;W^{1-\frac{1}{p}}_p(\pa \Om ))$. 
 See also \cite{KS2,sol1,Sol-2}.

In \cite{grubb3}, an interior or exterior domain problem was considered for $h\in \mbox{\r{B}}^{\al-\frac{2}{q}}_q(\Omega)$ and $g\in B^{\al-\frac{1}{q},\frac{\al}{2}-\frac{1}{2q}}_{q0}(\Omega \times (0, T))$, with $g_n=0$ (The result was given for $\al q>1$ if $h=0$ and for $\al q>2$ if $h\neq 0$. See also \cite{grubb1,grubb2,grubb,Sol-1}).
In \cite{farwig2,raymond1}, Stokes equations were solved for rough data including distributions.

The following theorem states our result on the unique solvability of the Stokes equations \eqref{maineq-stokes}.
\begin{theo}
\label{thm-stokes}
Let $1 < q < \infty$, $0 <\al <2$. In addition, let $h$ and $g$ be the same as those given in Theorem \ref{thm3}, and $f={\rm div } \, {\mathcal F}.$ Assume that ${\mathcal F}\in L^p (0,\infty; B^{\be}_{p}(\R_+ ))$ for some $(\be, p)$ satisfying conditions $p\leq q$, $0<\be<\al\leq \be+1<2$, $0=1-\al+\be-(n+2)(\frac{1}{p}-\frac{1}{q})$, and $\frac{n+1}p > \frac{n+2}q -\al$. Then, there is a weak solution $u\in \dot B^{\al,\frac\al 2 }_{q}({\mathbb R}^n_+\times (0,\infty))$ satisfying
\begin{align*}
\| u\|_{ \dot B^{\al,\frac\al 2 }_{q}({\mathbb R}^n_+\times (0,\infty))}
 &\leq c\Big( \|h\|_{  { \dot B}^{\al-\frac{2}{q}}_{q\si}({\mathbb
R}^{n}_+)}
+\|g \|_{ \dot B^{\al-\frac{1}{q},\frac\al 2-\frac{1}{2q}}_{q}({\mathbb
R}^{n-1} \times (0,\infty))} +\|g_n\|_{   {\dot B}^{\frac12 \al
}_{q} (0,\infty; \dot B^{-\frac1q}_q ({\mathbb
R}^{n-1}))} \\
  &\qquad +\|g_n\|_{ L^q(0,\infty;\dot{B}^{\al-\frac{1}{q}}_{q}(\Rn))}
+\|{\mathcal F}\|_{ L^p (0, \infty; \dot B^{\be}_{p}(\R_+ ))} \Big).
\end{align*}

Moreover, if $\al-\frac{n+2}{q}=-\frac{n+2}{r}$ for some $r$ with $1<r<\infty$, then the solution is unique in the class $\dot{B}^{\al,\frac{\al}{2}}_q(\R_+\times (0,\infty))$.
\end{theo}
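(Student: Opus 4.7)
The plan is to exploit the linearity of \eqref{maineq-stokes} and decompose $u = u^I + u^B + u^F$, where $u^I$ carries the initial data $h$ (with zero boundary data and zero force), $u^B$ carries the boundary data $g$ (with zero initial data and zero force), and $u^F$ carries the forcing $f = {\rm div}\,\mathcal{F}$ (with zero initial and boundary data). The compatibility condition \eqref{compatibility1} is precisely what makes the split of $h$ from $g$ admissible, while the scaling and integrability hypotheses on $\mathcal{F}$ are tailored so that $u^F$ can be reduced to the same boundary-data theory that handles $u^B$.

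For $u^I$, I would take the solenoidal extension $\tilde h \in \dot B^{\al-\frac{2}{q}}_{q\si}(\R)$ from the discussion of \eqref{compatibility1} and set $u^I := (\Gamma_t * \tilde h)|_{\R_+ \times (0,\infty)}$; since $\Gamma_t * \tilde h$ is divergence-free and solves the heat equation on $\R$, it solves the Stokes equation on $\R_+$ with zero pressure and zero force, and the heat-semigroup characterisation of $\dot B^{\al,\frac{\al}{2}}_q$ yields $\|u^I\|_{\dot B^{\al,\al/2}_q} \leq c\,\|h\|_{\dot B^{\al-2/q}_q(\R_+)}$, with boundary trace controlled via Lemma \ref{proheat1}. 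Setting $\hat g := g - \Gamma_t * \tilde h|_{x_n = 0}$, the compatibility condition \eqref{compatibility1} says exactly that $\hat g \in \dot B^{\al-\frac{1}{q},\frac{\al}{2}-\frac{1}{2q}}_{q(0)}(\Rn \times (0,\infty))$, and $\hat g_n$ inherits the two auxiliary hypotheses satisfied by $g_n$ (using solenoidality of $\tilde h$ to handle the $\dot B^{-1/q}_q$ and $L^q \dot B^{\al-1/q}_q$ norms). I would then apply the earlier half-space Stokes boundary-value theory for zero initial data and boundary data $\hat g$ in this zero-trace class to produce $u^B \in \dot B^{\al,\al/2}_q$ with the claimed linear estimate.

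For $u^F$, the strategy is to extend $\mathcal{F}$ from $\R_+$ to $\R$ preserving its $L^p(0,\infty;\dot B^\be_p)$ norm, to solve the whole-space Stokes problem for the extended source by applying Duhamel to the heat semigroup composed with the whole-space Helmholtz projection $\mathbb{P}$ so as to produce $U$ on $\R \times (0,\infty)$, and then to set $u^F := U|_{\R_+ \times (0,\infty)} - v$, where $v$ is the half-space Stokes solution with zero initial data, zero force, and boundary data equal to the spatial trace $U|_{x_n = 0}$. The scaling identity $0 = 1 - \al + \be - (n+2)(\frac{1}{p} - \frac{1}{q})$ is the critical homogeneity that places $U$ in $\dot B^{\al,\al/2}_q$, and the strict inequality $\frac{n+1}{p} > \frac{n+2}{q} - \al$ is exactly what is needed for $U|_{x_n = 0} \in \dot B^{\al-\frac{1}{q},\frac{\al}{2}-\frac{1}{2q}}_q(\Rn \times (0,\infty))$, so that $v$ is furnished by the boundary-data theory of the previous step.

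The main obstacle will be the trace estimate inside this last construction: controlling $\mathbb{P}\,{\rm div}\,\mathcal{F}$ and the resulting Oseen-tensor boundary kernel in the off-diagonal Besov scales $(p,\be) \to (q,\al-\frac{1}{q})$ requires a careful decomposition of $\mathbb{P}$ into local and nonlocal parts and the precise use of the strict inequality $\frac{n+1}{p} > \frac{n+2}{q} - \al$, with the scaling equality then guaranteeing that the final $\dot B^{\al,\al/2}_q$-norm is finite. For uniqueness under $\al - \frac{n+2}{q} = -\frac{n+2}{r}$, the difference $w$ of two solutions solves the homogeneous Stokes problem in $\dot B^{\al,\al/2}_q(\R_+ \times (0,\infty))$, and the critical Sobolev embedding furnished by this relation places $w$ in an $L^r$-type class in which uniqueness for the homogeneous Stokes problem is classical, forcing $w \equiv 0$.
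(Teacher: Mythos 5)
Your decomposition is, up to bookkeeping, the paper's own: the paper writes $u = v + V + \nabla\phi + w$ with $v = \Gamma_t * \tilde h$ (your $u^I$), $V$ the whole-space Duhamel solution driven by ${\mathbb P}\,{\rm div}\,\tilde{\mathcal F}$ (your $U$), and the remaining terms solving the half-space problem with zero data and corrected boundary values $g - v|_{x_n=0} - V|_{x_n=0}$; your two-stage trace correction reassembles to this by linearity, the role you assign to the scaling identity, to the strict inequality $\frac{n+1}{p} > \frac{n+2}{q}-\al$ (which is exactly the hypothesis of the trace part of Lemma \ref{propheat2-1}), and to the compatibility condition \eqref{compatibility1} all match, and your uniqueness argument (critical embedding into $L^r$, then testing against a dual Stokes solution) is the paper's.

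The one step you cannot take off the shelf is ``apply the earlier half-space Stokes boundary-value theory \ldots{} to boundary data $\hat g$ in this zero-trace class.'' The only boundary solver available at that point (Theorem \ref{Rn-1}, built on Solonnikov's explicit formula \eqref{simple}) requires $g_n = 0$; a solver for data with nonvanishing normal component is not a prior result but the substantive content of this proof, and it is precisely where the auxiliary hypotheses on $g_n$ are consumed. The paper supplies it by splitting the corrected data into a potential part and a tangential part: it sets $\phi(x,t) = 2\int_{\Rn} N(x'-y',x_n)\bigl(g_n - v_n|_{x_n=0} - V_n|_{x_n=0}\bigr)(y',t)\,dy'$, so that $\nabla\phi$ is harmonic in $x$ (hence a Stokes solution with pressure $-\phi_t$) and matches the normal component of the data; since $D_{x_n}\phi = P_{x_n}(g_n - v_n|_{x_n=0} - V_n|_{x_n=0})$ and $D_{x'}\phi = P_{x_n}R'(\cdots)$, Propositions \ref{prop0221-2} and \ref{proppoisson2} estimate $\nabla\phi$ exactly in terms of the norms $\dot B^{\al/2}_q(0,\infty;\dot B^{-1/q}_q(\Rn))$ and $L^q(0,\infty;\dot B^{\al-1/q}_q(\Rn))$ of $g_n$, with the traces $v_n|_{x_n=0}$, $V_n|_{x_n=0}$ controlled in the same norms by the solenoidal trace theorem (Proposition \ref{tracetheorem}). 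The residual tangential data $G'=g'-v'|_{x_n=0}-V'|_{x_n=0}-R'(g_n-v_n|_{x_n=0}-V_n|_{x_n=0})$ then has zero normal component and is fed into Theorem \ref{Rn-1}. Your outline acknowledges that $\hat g_n$ must carry the auxiliary norms but does not provide this mechanism, so as written the boundary step is circular; once the $\nabla\phi$ construction is inserted, the rest of your plan goes through as in the paper.
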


The remainder of this paper is organized as follows.
In Section \ref{notation}, we introduce the notations and function spaces.
Section \ref{preliminary} presents the preliminary estimates in homogeneous anisotropic Besov spaces for the Riesz and Poisson heat operators. 
In Section \ref{zero}, we consider Stokes equations \eqref{maineq-stokes} with zero force and zero initial velocity, and provide proof of Theorem \ref{Rn-1}.
Sections  \ref{general} show the proof of Theorem \ref{thm-stokes} with the help of Theorem \ref{Rn-1} and the preliminary estimates in Section \ref{preliminary}.
In Section \ref{nonlinear}, we give the proof of Theorem \ref{thm3} by constructing approximate solutions.

Our arguments in this paper are based on the elementary estimates of heat and Laplace operators.

\section{Notations and Definitions}

\label{notation}
\setcounter{equation}{0}
The points of spaces $\Rn$ and $\R$ are denoted by $x'$ and $x=(x',x_n)$, respectively.
In addition, multiple derivatives are denoted by $ D^{k}_x D^{m}_t = \frac{\pa^{|k|}}{\pa x^{k}} \frac{\pa^{m} }{\pa t}$ for multi-index
$ k$ and nonnegative integer $ m$.
For vector field $f=(f_1,\cdots, f_n)$ on $\R$, we write $f'=(f_1,\cdots, f_{n-1})$ and $f=(f',f_n)$.
Throughout this paper, we denote various generic constants by using $c$.
Let $\R_+=\{x=(x',x_n): x_n>0\}$, $\overline{\R_+}=\{x=(x',x_n): x_n\geq 0\}$, and
${\mathbb R}_+=(0,\infty)$.

For the Banach space $X$ and interval $I$, we denote by  $X'$ the dual space of $X$, and by $L^p(I;X), 1\leq p\leq \infty$  the usual Bochner space.
For $0< \theta<1$ and $1<p<\infty$,  denote by   $(X,Y)_{\theta,p}$  the real interpolation  space of the Banach space $X$ and $Y$.
For $1\leq p\leq \infty$, we write $p'=\frac{p}{p-1}$. For $s\in {\mathbb R}$, we write  $[s]=$ the largest integer less than $s$.

 Let   $\Omega$ be a $m$-dimensional Lipschitz domain, $m\geq 1$. 
 Let $1\leq p\leq \infty$ and $k$ be a nonnegative integer.
 The norms of usual  Lebesque space $L^p(\Omega)$, the usual homogeneous  Sobolev space $\dot W^k_p(\Omega)$   are written by $\|\cdot\|_{L^p(\Omega)}, \ \|\cdot\|_{\dot{W}^k_p(\Omega)}$, respectively.
    Note that $ \dot{W}^0_p(\Omega)=L^p(\Omega)$.

   For $s\in {\mathbb R}$, we denote by $\dot B^s_{p,q} ({\mathbb R}^m), 1\leq p,q\leq \infty$  the  usual homogeneous Besov space in ${\mathbb R}^m$ and
 denote by   $\dot{B}^s_{p,q}(\Omega)$  
 the restriction of  $\dot{B}^s_{p,q}({\mathbb R}^m)$ to $\Omega$.
 For the simplicity, set $  \ \dot B^s_{p}(\Omega)=\dot B^s_{p,p}(\Omega)$.

 It is known that  
  $\dot B^{s}_p(\Omega)=(L^p(\Omega), \dot W^{k}_p(\Omega))_{\frac{s}{k}, p}$ for $0<s<k$ and
 $\dot B^{s}_p(\Omega)=(\dot B^{s_1}_p(\Omega), \dot B^{s_2}_p(\Omega))_{\theta, p}$
 for $s=(1-\theta)s_1+\theta s_2$, $0<\theta<1$  and $1<p<\infty$.
 In particular,
$\dot B^{s}_p(\Omega)= \Big(\dot B^{-s}_{p'}(\Omega)\Big)'
$ if $-1+\frac{1}{p}<s<\frac{1}{p}$ and $1<p<\infty$.    See \cite{Tr} for the reference.

Denote by $\dot B^s_{q\sigma}(\R) = \{ f \in \dot B^s_q (\R) \, | \, {\rm div} \, f =0 \}$   and $\dot B^s_{q\sigma}(\Omega)$ is the restriction of $\dot B^s_{q\sigma}(\R)$ to $\Omega$.

Now, we introduce homogeneous anisotropic  Besov space and its properties (See 
Chapter 5 of \cite{Triebel2}, and Chapter 3 of \cite{amman-anisotropic} for the definition of homogeneous anisotropic  spaces and their properties, although different notations were used in each books).

Define  homogeneous anisotropic   Besov space $\dot B^{s,\frac{s}{2}}_p({\mathbb R}^n\times {\mathbb R})$ by
\begin{align*}
\dot B^{s,\frac{s}{2}}_p({\mathbb R}^n\times {\mathbb R})=\left\{\begin{array}{l} \vspace{2mm}
L^p({\mathbb R};\dot B^s_p({\mathbb R}^n))\cap L^p({\mathbb R}^n;\dot B^{\frac{s}{2}}_p({\mathbb R}))\mbox{ if }s>0,\\ \vspace{2mm}
L^p({\mathbb R};\dot B^s_p({\mathbb R}^n))+ L^p({\mathbb R}^n;\dot B^{\frac{s}{2}}_p({\mathbb R}))\mbox{ if }s<0,\\
(\dot B^{-\ep,-\frac{\ep}{2}}_p({\mathbb R}^n\times {\mathbb R}),\dot B^{\ep,\frac{\ep}{2}}_p({\mathbb R}^n\times {\mathbb R}))_{\frac{1}{2},p},  \ep>0\mbox{   if   }s=0.\end{array}\right.
\end{align*}
The above  definition is equivalent to the definitions in \cite{amman-anisotropic,Triebel2}.
Denote by $\dot {B}^{s, \frac s2 }_{q} (\Omega\times I)$
 the restriction of $\dot {B}^{s, \frac s2 }_{q} ({\mathbb R}^n\times {\mathbb R})$
 to $\Omega \times I$, with norm
\begin{align*}
\|f\|_{\dot B^{s, \frac{s}2 }_p (\Omega \times I)}=\inf\{ \|F\|_{\dot B^{s, \frac{s}2 }_p(\R \times {\mathbb R})}: F\in \dot B^{s, \frac{s}2}_p(\R \times {\mathbb R})\mbox{ with } F|_{\Omega \times I}=f\}.
\end{align*}

For $ k\in {\mathbb N}\cup\{0\}$, denote by  $\dot{W}^{2k,k}_q(\Omega \times I)$  the usual homogeneous anisotropic Sobolev space.

  The  properties of the homogeneous  anisotropic Besov spaces are comparable with the properties of  Besov spaces:
%
In particular, the following properties can be used in this paper.
\begin{prop}
\label{prop2}
\begin{itemize}
\item[(1)]
 The real interpolation method gives

\begin{align*}
\dot B^{s,\frac{s}{2}}_{p}(\R\times {\mathbb R} )   &=(L^p(\R\times {\mathbb R}),
    \dot W^{2k, k}_{p}(\R\times {\mathbb R}))_{\frac{s}{2k},p},\  0<s<2k,\\
    \dot B^{s,\frac{s}{2}}_{p}(\R\times {\mathbb R})   &=(\dot B^{s_1,\frac{s_1}{2}}_{p}(\R\times {\mathbb R}),
   \dot B^{s_2,\frac{s_2}{2}}_{p}(\R\times {\mathbb R}))_{\te,p},\ \, 0<\te<1, \ s = (1-\te)s_1 + \te s_2,\ s_1< s_2
   \end{align*}  for any real number  $1<p<\infty$.

%

\item[(2)] For $s > 0$
\begin{align*}
\dot B^{s,\frac{s}{2}}_{p}(\Omega\times I)   &=L^p(I;\dot B^s_p(\Omega))\cap L^p(\Omega; \dot B^{\frac{s}{2}}_{p}(I)).
 \end{align*}

\item[(3)]
Let $ 1 < p_0\leq p_1  < \infty, \,  1 < q_0\leq q_1  < \infty$ and $ s_0\geq s_1$ with $s_0 - \frac{n+2}{p_0} = s_1 - \frac{n+2}{p_1}$.
Then, the following inclusions hold
\begin{align*}
\dot  B^{s_0,\frac{s_0}{2}}_{p_0 q_0} (\R \times {\mathbb R}) \subset   \dot W^{s_1,\frac{s_1}{2}}_{p_1 q_1} (\R \times {\mathbb R}),\qquad
\dot  B^{s_0,\frac{s_0}{2}}_{p_0 q_0} (\R \times {\mathbb R}) \subset   \dot B^{s_1,\frac{s_1}{2}}_{p_1 q_1} (\R \times {\mathbb R}).
\end{align*}
%

\item[(4)]
For $f \in  \dot W_p^{\al, \frac{\al}2} (\R \times {\mathbb R})$ and $f \in  \dot B_p^{\al, \frac{\al}2} (\R \times {\mathbb R}), \, \al > \frac1p$, $f|_{x_n =0} \in  \dot B_p^{\al -\frac1p, \frac{\al}2 -\frac1{2p}}(\Rn \times {\mathbb R})$ with
\begin{align*}
\| f \|_{ \dot B_p^{\al -\frac1p, \frac{\al}2 -\frac1{2p}} (\Rn \times {\mathbb R})} \leq c \| f \|_{ \dot W_p^{\al, \frac{\al}2} (\R \times {\mathbb R})},\\
\| f \|_{\dot  B_p^{\al -\frac1p, \frac{\al}2 -\frac1{2p}} (\Rn \times {\mathbb R})} \leq c \| f \|_{ \dot B_p^{\al, \frac{\al}2} (\R \times {\mathbb R})}.
\end{align*}

\item[(5)]
For $f \in  \dot W_p^{\al, \frac{\al}2} (\R \times {\mathbb R})$ and $f \in  \dot B_p^{\al, \frac{\al}2} (\R \times {\mathbb R}), \, \al > \frac2p$, $f|_{t =0 } \in  \dot B_p^{\al -\frac2p }(\R )$ with
\begin{align*}
\| f|_{t =0} \|_{ \dot B_p^{\al -\frac2p } (\R )} \leq c \| f \|_{ \dot W_p^{\al, \frac{\al}2} (\R \times {\mathbb R})},\quad
\| f|_{t=0} \|_{ \dot B_p^{\al -\frac2p } (\R )} \leq c \| f \|_{ \dot B_p^{\al, \frac{\al}2} (\R \times {\mathbb R})}.
\end{align*}

 \end{itemize}
\end{prop}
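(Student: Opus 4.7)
The proposition bundles together five standard facts about the homogeneous anisotropic Besov scale $\dot B^{s,s/2}_p$, so my plan is not to prove each from scratch but to reduce every item to results already available in Triebel (Chap.~5 of \cite{Triebel2}) and Amann (Chap.~3 of \cite{amman-anisotropic}), after first checking that the definition given in Section~\ref{notation} is equivalent to the Littlewood--Paley--based definitions used there. The equivalence itself is the case $s=0$ being handled by the interpolation prescription $(\dot B^{-\ep,-\ep/2}_p,\dot B^{\ep,\ep/2}_p)_{1/2,p}$, and the cases $s\neq 0$ being immediate from the intersection/sum formulas, both of which are known to coincide with dyadic Littlewood--Paley norms. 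Throughout, I would work on the full space $\R\times{\mathbb R}$ first and obtain the $\Omega\times I$ statements by restriction, using that the restriction operator is bounded and that for $s>0$ the intersection space restricts term by term.

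For (1), I would start from the identification $\dot B^{s,s/2}_p(\R\times{\mathbb R})=(L^p,\dot W^{2k,k}_p)_{s/(2k),p}$, which is the standard real-interpolation description of the anisotropic Besov scale; this is item (ii) of \cite[Thm.~3.6.1]{amman-anisotropic} applied with weights $(1,1/2)$. The second identity in (1) is then the reiteration theorem of Lions--Peetre applied to two points $s_1<s_2$ in the scale.

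For (2), the intersection formula is contained in the definition on $\R\times{\mathbb R}$ (for $s>0$), and passes to $\Omega\times I$ via bounded restriction together with existence of a bounded extension operator (Stein-type) for positive-order anisotropic Besov spaces over Lipschitz cylinders. For (3), I would perform a dyadic Littlewood--Paley decomposition with the anisotropic parabolic dilations $(x,t)\mapsto(2^j x,2^{2j}t)$; the relevant Bernstein inequality then yields the embedding exactly when the scaling index $s-(n+2)/p$ is preserved, which is the hypothesis. The passage between $B$- and $W$-targets uses the trivial inclusions $\dot B^{s}_{p,\min(p,q)}\subset \dot W^s_p\subset \dot B^{s}_{p,\max(p,q)}$ (and their anisotropic analogues).

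Items (4) and (5) are the two anisotropic trace theorems and are the only points that need real care. The plan is to use the interpolation identity in (1): for the first target space, $\dot W^{2k,k}_p(\R\times{\mathbb R})$ with $2k>\al$, the trace $f\mapsto f|_{x_n=0}$ is the classical trace in the spatial variable only, landing in $\dot B^{2k-1/p,k}_p(\Rn\times{\mathbb R})$ if one works at integer level, and by the anisotropic analogue of the Lions trace theorem actually in $\dot W^{2k-1/p,(2k-1/p)/2}_p$; real interpolation at parameter $\al/(2k)$ then produces the stated loss of $1/p$ in space and $1/(2p)$ in time. The same argument with trace $f\mapsto f|_{t=0}$ produces the loss of $2/p$ in (5) because the parabolic scaling counts $t$ as two spatial derivatives. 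The main obstacle, and the reason care is needed, is verifying that traces and real interpolation commute in the anisotropic setting with the correct loss-of-regularity accounting; this is done in \cite{amman-anisotropic} via retractions to sequence spaces, and I would invoke those results directly rather than re-derive them.
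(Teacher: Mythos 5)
Your proposal is correct and follows essentially the same route as the paper, which gives no argument of its own for this proposition but simply cites Bergh--L\"ofstr\"om (p.~169, Thm.~2.4.2.1 of \cite{Tr}, Thms.~6.4.5, 6.5.1 and 6.6.1 of \cite{BL}) and Dappa--Triebel \cite{DT}; your sketch (real-interpolation identification for (1), the definition plus extension/restriction for (2), parabolic Littlewood--Paley/Bernstein for (3), and trace via retraction for (4)--(5)) is exactly the content of those references. The one point to keep straight in (4)--(5) is that the trace operator is not bounded on the $L^p$ endpoint, so the interpolation must be run between two positive-order endpoints (or through the retraction/coretraction machinery of \cite{amman-anisotropic} that you already invoke), not between $L^p$ and $\dot W^{2k,k}_p$.
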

For the proof of $(1)$, refer to page 169 in \cite{BL} or (a) of Theorem 2.4.2.1 in \cite{Tr}, and  Theorem 6.4.5 in \cite{BL}.
For the proof of $(2)$, refer to the proof of  Theorem 3 in \cite{DT}.
For the proof of $(3)$, refer to the proof of Theorem 6.5.1 in \cite{BL}, and for the proofs of (4) and (5), refer to Theorem 6.6.1 in \cite{BL}.

\begin{rem}\label{rem0208}
The properties in Proposition \ref{prop2} of the homogeneous anisotropic Besov spaces in $\R \times {\mathbb R}$ hold for the homogeneous Besov spaces in $\R_+ \times  {\mathbb R}_+$ and $\Rn \times  {\mathbb R}_+ $ (see \cite{amman-anisotropic, Triebel,Triebel2}).

\end{rem}

Denote by $\dot B^{s,\frac{s}{2}}_{p(0)}(\Rn \times {\mathbb R}_+)$ the completion of $C^\infty_0(\Rn\times {\mathbb R}_+)$ in $\dot B^{s,\frac{s}{2}}_{p}(\Rn \times {\mathbb R}_+)$.
%
%
It is known that
 \begin{align}\label{homo0221}
 \dot  B^{s,\frac{s}{2}}_{p(0)}(\Rn \times {\mathbb R}_+) & = \left\{\begin{array}{l} \vspace{2mm}
 \{g\in \dot B^{s,\frac{s}{2}}_{p}(\Rn \times  {\mathbb R}_+): g|_{t=0}=0\}\quad \mbox{ if } \quad  2> s > \frac{2}{p},\\
  \dot B^{s,\frac{s}{2}}_{p}(\Rn \times {\mathbb R}_+)
 \quad \mbox{ if } \quad 0\leq s<\frac{2}{p},\\
 \Big( \dot B^{-s,-\frac{s}{2}}_{p'}(\Rn \times {\mathbb R}_+)\Big)'
 \quad\mbox{ if }-2<s<0,
 \end{array}\right.
 \end{align}
where $p'$ is the H$\ddot{o}$lder conjugate of $p$, that is, $\frac1p + \frac1{p'} =1$.

\begin{rem}\label{rem0101-1}
\begin{itemize}
\item[(1)]
Let $f\in \dot B^{s,\frac{s}{2}}_{p(0)}(\Rn\times {\mathbb R}_+)$.
If $ 0 \leq  s $, then its zero extension $\tilde{f}$ to $\Rn \times {\mathbb R}$ is contained in $\dot B^{s,\frac{s}{2}}_{p}(\Rn \times {\mathbb R})$ such that $\| \tilde{f}\|_{\dot B^{s,\frac{s}{2}}_{p}(\Rn \times {\mathbb R})}\approx \|f\|_{\dot B^{s,\frac{s}{2}}_{p}(\Rn \times {\mathbb R}_+)}$. If $s < 0$, then the zero extension $\tilde f$ of the distribution $f$ is defined by
\begin{align*}
<< \tilde f, \phi>>
:= < f, \phi|_{\Rn \times {\mathbb R}_+} >,
\end{align*}
for any $\Phi\in \dot B^{-s, -\frac{s}2}_{p'} (\Rn \times {\mathbb R} )$, where $p'$ is the H$\ddot{o}$lder conjugate of $p$, that is, $\frac1p + \frac1{p'} =1$. Then, $\| \tilde{f}\|_{\dot B^{s,\frac{s}{2}}_{p}({\mathbb R}^n\times {\mathbb R})}\approx \|f\|_{\dot B^{s,\frac{s}{2}}_{p}({\mathbb R}^n \times  {\mathbb R}_+)}$.
Here $ <\cdot , \cdot >$ is the duality pairing between $ \dot B^{s, \frac{s}2}_{p(0)} (\Rn \times  {\mathbb R}_+)$ and $ \dot B^{-s, -\frac{s}2}_{p'} (\Rn \times {\mathbb R}_+)  $.

\item[(2)]

Let $f\in \dot B^{s,\frac{s}{2}}_{p}({\mathbb R}^n_+ \times {\mathbb R}_+)$, $ 0 <   s  <2 $. Let $\tilde{f} $ be Adam's extension of $f$ with respect to space (see Theorem 5.19 in \cite{AF}) and   reflective extension with respect to time. Then, $\tilde{f} $ is in $\dot B^{s,\frac{s}{2}}_{p}({\mathbb R}^n\times {\mathbb R})$ with  $\| \tilde{f}\|_{\dot B^{s,\frac{s}{2}}_{p}({\mathbb R}^n\times {\mathbb R})}\leq c \|f\|_{\dot B^{s,\frac{s}{2}}_{p}({\mathbb R}^n \times {\mathbb R}_+)}$.

\end{itemize}
\end{rem}


%
%

Now, we introduce a notion of weak
solutions of the Navier--Stokes equations with nonzero boundary data.

\begin{defin}[Weak solution to the Stokes equations]
\label{stokesdefinition}
Let $0< \al<2$, and let $h,g, f={\rm div}\, {\mathcal F}$ satisfy the same hypothesis as given in Theorem \ref{thm-stokes}.
Then, vector field $u\in \dot B^{\al,\frac{\al}{2}}_q(\R_+\times {\mathbb R}_+)$ is called a weak solution of the Stokes system \eqref{maineq-stokes} if the following conditions are satisfied:\\
 \begin{align*}
-\int^\infty_0\int_{\R_+} u \cdot\big( \Delta \Phi + D_t \Phi \big) dxdt&= - \int^\infty_0\int_{\R_+} {\mathcal F}:\nabla \Phi dxdt -\int_{\R_+}h(y)\Phi(y,0)dy\\
&\quad
-\int^\infty_0\int_{\Rn}g(x',t) \cdot \frac{\partial \Phi}{\partial x_n}(x',t)dx'dt
\end{align*}
for each $\Phi\in C^\infty_c(\overline{\R_+}\times [{\mathbb R}_+))$ with ${\rm div}_x\, \Phi=0$, $\Phi|_{x_n=0}=0$.


\end{defin}

\begin{defin}[Weak solution to the Navier--Stokes equations] Let $0<\al<2$.
Let $h,g$ satisfy the same hypothesis as in Theorem \ref{thm3}.
Then, a vector field $u\in \dot B^{\al,\frac{\al}{2}}_q(\R_+\times {\mathbb R}_+)$ is called a weak solution of the Navier--Stokes system \eqref{maineq2} if the following conditions are satisfied:\\
 \begin{align*}
-\int^\infty_0\int_{\R_+} u \cdot\big( \Delta \Phi + D_t \Phi \big) dxdt &=\int^\infty_0\int_{\R_+}  (u\otimes u):\nabla \Phi  dxdt -\int_{\R_+} h(x) \cdot \Phi(x,0) dx\\
&\quad
-\int^\infty_0\int_{\Rn} g(x',t) \cdot \frac{\partial \Phi}{\partial x_n}(x',t)dx' dt
\end{align*}
for each $\Phi\in C^\infty_c(\overline{\R}_+\times [{\mathbb R}_+))$ with ${\rm div}_x\Phi=0$, $\Phi|_{x_n=0}=0$.


\end{defin}

\begin{rem}

If $ 0< \al < \frac1q$, then the term
$\int^\infty_0\int_{\Rn}g(x',t) \cdot \frac{\partial \Phi}{\partial x_n}(x',t)dx'dt$
 should be replaced by
$<g,\frac{\partial \Phi}{\partial x_n}>$, where $<\cdot,\cdot>$ 
is the duality pairing between  $\dot B^{\al-\frac{1}{q},\frac{\al}{2}-\frac{1}{2q}}_{q}(\Rn\times {\mathbb R}_+)$ and $\dot B^{-\al+\frac{1}{q}, -\frac{\al}{2}+\frac{1}{2q}}_{q'}(\Rn\times {\mathbb R}_+)$.

Similarly, if $ 0< \al < \frac2q$, then the term
$\int_{\R_+}h(x) \cdot  \Phi (x, 0)dx$
 should be replaced by
$<h,\Phi(\cdot, 0)>$, where $<\cdot,\cdot>$ 
is the duality pairing between  $\dot B^{\al-\frac2q}_{q}(\R_+ )$ and $\dot B^{-\al+\frac2q}_{q'}(\R_+ )$.

%

\end{rem}

\section{\bf Preliminaries.}

\label{preliminary}
\setcounter{equation}{0}

\subsection{Basic Theories}

%
%
%

Let $P_{x_n}$ be the Poisson operator defined by
\[P_{x_n}f(x)=
c_n\int_{\Rn}\frac{x_n}{(|x'-y'|^2+x_n^2)^{\frac{n}{2}}}f(y')dy'.\]
Note that $P_{x_n} f$ is a harmonic function in $\R_+$ and $P_{x_n} f|_{x_n =0} = f$ on $\Rn$.

\begin{prop}\label{prop0221-2}
Let $1 < p <\infty$ and $\al  >  0$. If $f \in \dot B^{\al -\frac1p}_p (\Rn)$, then $P_{x_n} f  \in  \dot
B^{\al }_{p}(\R_+)$ and
\begin{equation}\label{CK300-april10}
\| P_{x_n} f \|_{L^{p}(\R_+ )}\leq c \| f \|_{\dot B^{ -\frac1p}_p(\Rn )}, \quad
\| P_{x_n} f \|_{\dot B^{\al}_{p}(\R_+ )}\leq c \| f \|_{\dot B^{\al -\frac1p}_p(\Rn )}.
\end{equation}

\end{prop}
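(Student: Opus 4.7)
The plan is to prove the two bounds separately: the first via a Poisson-semigroup characterization of $\dot B^{-1/p}_p(\mathbb{R}^{n-1})$, the second first for integer exponents via the commutation of $P_{x_n}$ with tangential Fourier multipliers, and then for general $\alpha > 0$ by real interpolation.

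For the $L^p$ bound, I would use the classical characterization of the homogeneous Besov space,
\begin{align*}
\|f\|_{\dot B^s_p(\mathbb{R}^{n-1})}^p \;\approx\; \int_0^\infty t^{-sp-1}\, \|P_t f\|_{L^p(\mathbb{R}^{n-1})}^p\, dt \qquad (s < 0,\ 1 < p < \infty).
\end{align*}
Specializing to $s = -1/p$ kills the weight ($-sp-1 = 0$), and Fubini identifies the right-hand side with $\|P_{x_n} f\|_{L^p(\mathbb{R}^n_+)}^p$.

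For the second bound when $\alpha = k$ is a positive integer, I would exploit the Fourier-side formula $\widehat{P_{x_n} f}(\xi',x_n) = e^{-|\xi'|\,x_n}\, \hat f(\xi')$, which yields
\begin{align*}
\partial_{x_n}^j \partial_{x'}^\beta P_{x_n} f \;=\; (-1)^j\, P_{x_n}\bigl(\Lambda^j \partial_{x'}^\beta f\bigr), \qquad \Lambda := \sqrt{-\Delta'},
\end{align*}
for every multi-index $\beta$ and every $j \ge 0$. Since $\Lambda^j \partial_{x'}^\beta$ is a homogeneous Fourier multiplier of order $j + |\beta|$, applying the $L^p$ bound of the previous step to $\Lambda^j \partial_{x'}^\beta f$ gives $\|\partial_{x_n}^j \partial_{x'}^\beta P_{x_n} f\|_{L^p(\mathbb{R}^n_+)} \le c\, \|f\|_{\dot B^{k-1/p}_p(\mathbb{R}^{n-1})}$ whenever $j + |\beta| = k$. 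Summing produces $\|P_{x_n} f\|_{\dot W^k_p(\mathbb{R}^n_+)} \le c\, \|f\|_{\dot B^{k-1/p}_p(\mathbb{R}^{n-1})}$.

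For general $\alpha > 0$, I would pick any integer $k > \alpha$ and interpolate. The paper's own definition gives $(L^p(\mathbb{R}^n_+),\ \dot W^k_p(\mathbb{R}^n_+))_{\alpha/k,\,p} = \dot B^\alpha_p(\mathbb{R}^n_+)$, and the standard Besov reiteration identity gives $(\dot B^{-1/p}_p(\mathbb{R}^{n-1}),\ \dot B^{k-1/p}_p(\mathbb{R}^{n-1}))_{\alpha/k,\,p} = \dot B^{\alpha-1/p}_p(\mathbb{R}^{n-1})$; real interpolation of the linear map $f \mapsto P_{x_n} f$ between these two endpoint pairs yields the second inequality. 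The main subtlety is simply having the negative-index Poisson characterization of $\dot B^{-1/p}_p$ at hand; once this and the obvious shifting action of $\Lambda$ and $\partial_{x'}$ on homogeneous Besov norms are taken for granted, the rest of the proof is a short interpolation exercise.
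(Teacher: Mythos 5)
Your proof is correct and follows essentially the same route as the paper, which cites Koch--Solonnikov for the $\dot B^{-1/p}_p(\Rn)\to L^p(\R_+)$ bound and then invokes ``an interpolation argument'' for $\al>0$. You simply supply the details the paper leaves implicit: the Poisson-semigroup characterization at $s=-1/p$ for the first inequality, and the integer-order endpoint $\dot B^{k-1/p}_p(\Rn)\to\dot W^k_p(\R_+)$ via commutation with tangential multipliers before interpolating.
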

It is well known that $P_{x_n}$ is bounded from  $\dot{B}^{-\frac{1}{p}}_{p}(\Rn)$ to $L^p({\mathbb R}^n_+)$; this was proved in \cite[Lemma 2.1]{KS2}.
Through an interpolation argument, we obtained the second inequality of \eqref{CK300-april10} for $\al>0$.

In addition, for a solenoidal vector field $u\in L^p(\R_+)$,
the following trace theorem holds (See \cite{galdi} for the proof).
\begin{prop}\label{tracetheorem}
Let $0 <\al$ and  $1<p<\infty$. Let
$u \in L^p(\R_+)$ such that ${\rm div} \, u =0$. Then, $u_n \in \dot B^{-\frac1p}_p (\Rn)$ with
\begin{align*}
\| u_n\|_{\dot B^{-\frac1p}_p(\Rn)} \leq c \| u\|_{L^p(\R_+)}, \quad \| u_n\|_{\dot B^{\al -\frac1p}_p(\Rn)} \leq c \| u\|_{\dot B^\al_p (\R_+)}.
\end{align*}

\end{prop}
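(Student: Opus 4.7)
The plan is to define the normal trace $u_n|_{x_n=0}$ by duality, using the Poisson extension of test functions on $\Rn$, and then read off both estimates from the mapping properties of $P_{x_n}$ established in Proposition \ref{prop0221-2}. The crucial point is that although an arbitrary vector field in $L^p(\R_+)$ has no well-defined trace, the divergence-free constraint allows one to make sense of the \emph{normal} component on the boundary.

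Concretely, for $\phi\in C^\infty_c(\Rn)$ let $\Phi:=P_{x_n}\phi$, which is harmonic in $\R_+$ with $\Phi|_{x_n=0}=\phi$. Applied to a smooth divergence-free approximation of $u$, Green's identity together with ${\rm div}\,u=0$ yields the fundamental identity
\[
\int_{\Rn} u_n(x',0)\,\phi(x')\,dx' \;=\; -\int_{\R_+} u\cdot\nabla\Phi\,dx,
\]
which I will use to \emph{define} $u_n|_{x_n=0}$ as a distribution on $\Rn$. The definition is independent of the choice of extension: any competitor differs from $\Phi$ by a function in $\dot W^1_{p'}(\R_+)$ vanishing on $\{x_n=0\}$, and integration against $u$ with ${\rm div}\,u=0$ kills this difference.

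For the first inequality, apply H\"older and then Proposition \ref{prop0221-2} with $\al=1$ and $p$ replaced by $p'$:
\[
\Big|\int_{\R_+} u\cdot\nabla\Phi\,dx\Big|\;\le\;\|u\|_{L^p(\R_+)}\|\nabla\Phi\|_{L^{p'}(\R_+)}\;\le\;c\,\|u\|_{L^p(\R_+)}\|\phi\|_{\dot B^{1/p}_{p'}(\Rn)},
\]
and then invoke the duality $(\dot B^{1/p}_{p'}(\Rn))'=\dot B^{-1/p}_p(\Rn)$ to conclude. For the second inequality with $0<\al<1$, I repeat the scheme with $\phi\in\dot B^{1/p-\al}_{p'}(\Rn)$: Proposition \ref{prop0221-2} (with exponent $1-\al$ and $p\mapsto p'$) gives $\|\nabla P_{x_n}\phi\|_{\dot B^{-\al}_{p'}(\R_+)}\le c\|\phi\|_{\dot B^{1/p-\al}_{p'}(\Rn)}$, and pairing against $u\in\dot B^\al_p(\R_+)$ via the $\dot B^\al_p$--$\dot B^{-\al}_{p'}$ duality yields the stated bound. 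The remaining range $\al\ge1$ either follows from the standard (non-solenoidal) trace theorem, since then $\al>1/p$, or by real interpolation between the case $\al=0$ just proved and $\al=2$, applied to the linear map $u\mapsto u_n|_{x_n=0}$ on the subspace of solenoidal fields.

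The main technical difficulty lies in the borderline duality $(\dot B^{1/p}_{p'}(\Rn))'=\dot B^{-1/p}_p(\Rn)$: the formulation recalled in the excerpt requires $-1+1/p<s<1/p$, while here $s=-1/p$ sits at the left endpoint. On $\Rn$ this is the standard homogeneous Besov duality and can be justified via the Littlewood--Paley characterization, but one has to be careful that the pairing is well defined modulo polynomials and that the Poisson extension $P_{x_n}\phi$ is controlled in the relevant homogeneous norm for all $\phi$ in a dense class, so that the definition of $u_n|_{x_n=0}$ extends by density. Once this endpoint issue is settled, the rest of the argument is a direct transcription of Proposition \ref{prop0221-2} via duality.
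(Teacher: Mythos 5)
Your proposal is correct and follows essentially the same route as the paper: the normal trace is defined by duality against the Poisson extension $P_{x_n}\phi$ of a test function, and both estimates are read off from Proposition \ref{prop0221-2} via the $\dot B^{\al}_p$--$\dot B^{-\al}_{p'}$ pairing, with the range $\al>\frac1p$ handled by the usual trace theorem. The only differences are cosmetic: the paper cites \cite{galdi} for the first inequality rather than re-deriving it, and it passes over the endpoint duality issue you flag without comment.
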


\begin{proof}
The first inequality is a well-known result (see \cite{galdi} for the proof).

For $\al > \frac1p$, the second inequality is obtained from a usual trace theorem.

Let $0 < \al \leq \frac1p$ and $f \in \dot B^{-\al +\frac1p}_{p'}(\Rn)$, where $p'$ is the H$\ddot{o}$lder conjugate of $p$, that is, $\frac1p + \frac1{p'} =1$.
Let $\dot B^{-\al}_{p'0}(\R_+)$ be a  dual space of $\dot B^\al_p (\R_+)$ and  $<\cdot, \cdot>$ be a duality pairing between $\dot B^\al_p (\R_+)$ and $\dot B^{-\al}_{p'0}(\R_+)$.
Then, from Proposition \ref{prop0221-2}, we have
\begin{align*}
< u_n|_{x_n =0}, f> = \int_{\R_+} u(x) \cdot \na P_{x_n} f (x) dx\\
\leq \| u\|_{\dot B^\al_p (\R_+)} \| \na P_{x_n} f  \|_{\dot B^{-\al}_{p'0}(\R_+)}\\
\leq \| u\|_{\dot B^\al_p (\R_+)} \| P_{x_n} f  \|_{\dot B^{-\al +1}_{p'}(\R_+)}\\
\leq \| u\|_{\dot B^\al_p (\R_+)} \|  f  \|_{\dot B^{-\al +\frac1p}_{p'}(\Rn)}.
\end{align*}
Hence, the proof of the second inequality of Proposition \ref{tracetheorem} is completed.
\end{proof}

It is well known that the Riesz transforms in $\R$, $R_i$, where $1 \leq i \leq n$, are bounded from $\dot B^s_p(\R)$ to $\dot B^s_p(\R)$ for $s\in {\mathbb R}$
\cite{St}. According to the definition of the homogeneous anisotropic Besov space $\dot B^{s,\frac{s}{2}}_{q}({\mathbb R}^n\times {\mathbb R})$ and the multiplier theorem,
the following boundedness property holds true for homogeneous anisotropic Besov spaces.
\begin{prop}
\label{propriesz2}
Let $1<q<\infty$. Then,
\begin{align*}
\|R_i f\|_{ \dot B^{s,\frac{s}{2}}_{q}({\mathbb R}^n\times {\mathbb R})}&\leq c\|f\|_{ \dot B^{s,\frac{s}{2}}_{q}({\mathbb R}^n\times {\mathbb R})},\,\,s\in {\mathbb R}, \,\, 1 \leq i \leq n.
\end{align*}

\end{prop}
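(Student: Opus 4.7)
The plan is to use the anisotropic Littlewood--Paley characterization of $\dot B^{s,s/2}_q(\R\times {\mathbb R})$ combined with the $L^q$-boundedness of the classical Riesz transform, which gives a uniform argument for every $s\in{\mathbb R}$.

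First, I fix a parabolic dyadic resolution $\{\phi_k\}_{k\in{\mathbb Z}}$ adapted to the scaling $(\xi,\tau)\mapsto(\lambda\xi,\lambda^2\tau)$, writing $\phi_k(\xi,\tau)=\phi_0(2^{-k}\xi,2^{-2k}\tau)$ with $\phi_0$ supported in an annular region of $\{|\xi|^2+|\tau|\asymp 1\}$. Letting $\Delta_k=\phi_k(D)$ be the corresponding frequency block, one has the standard norm equivalence
\[
\|f\|_{\dot B^{s,s/2}_q(\R\times{\mathbb R})}\sim\Big(\sum_{k\in{\mathbb Z}}2^{ksq}\|\Delta_k f\|_{L^q(\R\times{\mathbb R})}^q\Big)^{1/q}
\]
valid for every $s\in{\mathbb R}$ and $1<q<\infty$ (Chapter~5 of \cite{Triebel2} and Chapter~3 of \cite{amman-anisotropic}).

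Second, the symbol $m_i(\xi)=-i\xi_i/|\xi|$ of $R_i$ is $\tau$-independent, so $R_i$ commutes with each $\Delta_k$. Applying the classical $L^q(\R)$-boundedness of the Riesz transform slice-by-slice in $t$ and invoking Fubini yields $\|R_i g\|_{L^q(\R\times{\mathbb R})}\le c\|g\|_{L^q(\R\times{\mathbb R})}$ with $c$ independent of $g$. Substituting $g=\Delta_k f$ produces the uniform block bound $\|\Delta_k R_i f\|_{L^q}=\|R_i\Delta_k f\|_{L^q}\le c\|\Delta_k f\|_{L^q}$.

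Finally, multiplying by $2^{ksq}$, summing over $k\in{\mathbb Z}$, and taking $q$-th roots of both sides gives $\|R_i f\|_{\dot B^{s,s/2}_q}\le c\|f\|_{\dot B^{s,s/2}_q}$, as required. The borderline case $s=0$ (defined via real interpolation in Section~\ref{notation}) follows either directly from the same Littlewood--Paley estimate or by interpolating the cases $s=\pm\ep$ through Proposition~\ref{prop2}(1). I expect the only delicate point to be invoking the homogeneous anisotropic Littlewood--Paley characterization cleanly, since homogeneous Besov spaces are typically treated modulo polynomials; however, for $1<q<\infty$ this is by now standard and already built into the references cited by the paper.
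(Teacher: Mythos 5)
Your proof is correct. It is worth noting, though, that it runs along a different track from the one the paper (very tersely) indicates. The paper's justification is to work directly with its definition of $\dot B^{s,\frac{s}{2}}_{q}({\mathbb R}^n\times {\mathbb R})$ as the intersection (for $s>0$), sum (for $s<0$), or interpolation (for $s=0$) of the two mixed-norm spaces $L^q({\mathbb R};\dot B^s_q({\mathbb R}^n))$ and $L^q({\mathbb R}^n;\dot B^{\frac{s}{2}}_q({\mathbb R}))$: since $R_i$ acts only in the $x$-variable, its boundedness on the first factor follows from the classical result that $R_i:\dot B^s_q({\mathbb R}^n)\to\dot B^s_q({\mathbb R}^n)$ together with Fubini in $t$, and its boundedness on the second factor follows from the $L^q({\mathbb R}^n)$-boundedness of $R_i$ applied inside the time-regularity seminorm (the ``multiplier theorem'' the paper invokes), after which one intersects, sums, or interpolates. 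You instead use a single joint parabolic Littlewood--Paley decomposition, commute $R_i$ with each frequency block, and use only the $L^q({\mathbb R}^n\times{\mathbb R})$-boundedness of $R_i$; this is cleaner and uniform in $s$, at the cost of having to invoke the equivalence of the paper's definition with the Fourier-analytic one in \cite{Triebel2,amman-anisotropic} (which the paper itself asserts) and the usual modulo-polynomials caveat for homogeneous spaces, which you correctly flag. Both arguments are sound; the paper's route needs no joint frequency decomposition but must handle the three sign cases of $s$ separately, while yours handles all $s\in{\mathbb R}$ in one stroke and only needs the endpoint interpolation remark for $s=0$ because of how the paper chose to define that case.
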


We say that a distribution $f$ in $\Rn \times {\mathbb R}_+$ is contained in function space $ \dot B^{\frac{\al}{2}}_q({\mathbb R}_+;\dot{B}^{-\frac{1}{q}}_q(\Rn)), \, 0 < \al < 2$ if $f$ satisfies
$$\| f\|_{\dot B^{\frac{\al}{2}}_q({\mathbb R}_+;\dot{B}^{-\frac{1}{q}}_q(\Rn))} : = \Big( \int_0^\infty \int_0^\infty \frac{ \| f(\cdot, t) -f(\cdot, s)\|^q_{\dot B^{-\frac1p}_q (\Rn)}}{ |t-s|^{1 + \frac{\al}2 q}  }     dsdt \Big)^\frac1q <\infty.$$

\begin{prop}
\label{proppoisson2}
Let $0< \al< 2$ and $1< q < \infty$.
%
If $f\in L^q({\mathbb R}_+; \dot{B}^{\al-\frac{1}{q}}_q(\Rn))\cap \dot B^{\frac{\al}{2}}_q({\mathbb R}_+;\dot{B}^{-\frac{1}{q}}_q(\Rn))$, then,
$
P_{x_n}f\in  \dot  B^{\al,\frac{\al}{2}}_q(\R_+\times {\mathbb R}_+)
$
with
\begin{align*}
\|P_{x_n}f\|_{\dot B^{\al,\frac{\al}{2}}_q(\R_+\times {\mathbb R}_+)}&\leq c \big( \|f\|_{L^q({\mathbb R}_+;\dot{B}^{\al-\frac{1}{q}}_q(\Rn))}
+ \|f\|_{\dot B_q^{\frac{\al }{2}}({\mathbb R}_+;\dot{B}_q^{-\frac{1}{q}}(\Rn))} \big).
\end{align*}
\end{prop}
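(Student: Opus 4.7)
The plan is to use the intersection characterization from Proposition~\ref{prop2}(2): since $\al>0$,
\[
\dot B^{\al,\frac{\al}{2}}_q(\R_+\times {\mathbb R}_+) = L^q({\mathbb R}_+;\dot B^\al_q(\R_+)) \cap L^q(\R_+;\dot B^{\frac{\al}{2}}_q({\mathbb R}_+)),
\]
so the estimate splits into two independent bounds, one controlling spatial regularity and one controlling time regularity of $P_{x_n}f$. The key observation is that $P_{x_n}$ acts only in the spatial variables and commutes with the time variable, in particular with time differences $f(\cdot,t)-f(\cdot,s)$.

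For the spatial term, I would apply Proposition~\ref{prop0221-2} slicewise in $t$: for a.e. $t>0$,
\[
\|P_{x_n} f(\cdot,t)\|_{\dot B^\al_q(\R_+)} \leq c\,\|f(\cdot,t)\|_{\dot B^{\al-\frac1q}_q(\Rn)},
\]
and then raise to the $q$-th power and integrate in $t$ to obtain the $L^q({\mathbb R}_+;\dot B^\al_q(\R_+))$-bound by $\|f\|_{L^q({\mathbb R}_+;\dot B^{\al-1/q}_q(\Rn))}$.

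For the time term, since $0<\al<2$ implies $0<\al/2<1$, the homogeneous Besov norm $\dot B^{\al/2}_q({\mathbb R}_+)$ admits the Gagliardo-type difference characterization
\[
\|g\|_{\dot B^{\al/2}_q({\mathbb R}_+)}^q \approx \int_0^\infty\!\!\int_0^\infty \frac{|g(t)-g(s)|^q}{|t-s|^{1+\frac{\al}{2}q}}\,ds\,dt.
\]
Applied to $g(t)=P_{x_n}f(x,t)$ and integrated in $x\in\R_+$, Fubini yields
\[
\|P_{x_n}f\|_{L^q(\R_+;\dot B^{\al/2}_q({\mathbb R}_+))}^q = \int_0^\infty\!\!\int_0^\infty \frac{\|P_{x_n}(f(\cdot,t)-f(\cdot,s))\|_{L^q(\R_+)}^q}{|t-s|^{1+\frac{\al}{2}q}}\,ds\,dt.
\]
Applying the first bound of Proposition~\ref{prop0221-2}, namely $\|P_{x_n} h\|_{L^q(\R_+)}\leq c\,\|h\|_{\dot B^{-1/q}_q(\Rn)}$, to $h=f(\cdot,t)-f(\cdot,s)$ reduces the right-hand side to $c\,\|f\|_{\dot B^{\al/2}_q({\mathbb R}_+;\dot B^{-1/q}_q(\Rn))}^q$, exactly the second term on the right of the claimed inequality. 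Summing the two bounds delivers the proposition.

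The main subtle point, rather than the main obstacle, is verifying that the Gagliardo-difference formula really gives an equivalent norm for $\dot B^{\al/2}_q({\mathbb R}_+)$ when the target values lie in a Banach space and that the use of Fubini is legitimate; both are standard for $0<\al/2<1$ and $1<q<\infty$, and the one-sided interval ${\mathbb R}_+$ causes no issue because we are working with the restriction norm. Everything else reduces to two applications of the single-variable Poisson estimates already recorded in Proposition~\ref{prop0221-2}.
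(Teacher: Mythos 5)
Your proposal is correct and follows essentially the same route as the paper: both split the norm via Proposition \ref{prop2}(2), handle the spatial part by applying Proposition \ref{prop0221-2} slicewise in $t$, and handle the temporal part by applying the $L^q$ Poisson bound to the differences $f(\cdot,t)-f(\cdot,s)$ inside the Gagliardo-type integral. No meaningful differences to report.
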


\begin{proof}
From Proposition \ref{prop0221-2}, we have
\begin{align*}
\| P_{x_n} f\|_{L^q (0, \infty; \dot B^\al_q (\R_+))} \leq c \|f\|_{L^q({\mathbb R}_+;\dot{B}^{\al-\frac{1}{q}}_q(\Rn))}
.
\end{align*}
 Proposition \ref{prop0221-2} also gives the estimate \begin{align*}
\|P_{x_n} f(x',t) - P_{x_n} f(x',s)\|_{L^q(\R_+)}
\leq c   \| f(\cdot, t) -f(\cdot, s)\|_{\dot B^{-\frac1q}_q (\Rn)}.
\end{align*}
Hence we have
\begin{align*}
\| P_{x_n} f\|_{L^q (\R_+; \dot B^{\frac{\al}2}_q {\mathbb R}_+)}
 &=  \Big(\int_{\R_+} \int_0^\infty \int_0^\infty \frac{|P_{x_n} f(x',t) - P_{x_n} f(x',s)|^q  }{|t-s|^{1 +\frac{\al}2q}  } dsdt dx  \Big)^\frac1q\\
& \leq c  \Big( \int_0^\infty \int_0^\infty \frac{ \| f(\cdot, t) -f(\cdot, s)\|^q_{\dot B^{-\frac1q}_q (\Rn)}}{ |t-s|^{1 + \frac{\al}2 q}  }     dsdt \Big)^\frac1q  \\
& = c \| f\|_{\dot B^{\frac{\al}{2}}_q({\mathbb R}_+;\dot{B}^{-\frac{1}{q}}_q(\Rn))}.
\end{align*}
Hence, from (2) of Proposition \ref{prop2}, we complete the proof of  Proposition \ref{proppoisson2}.
\end{proof}

\subsection{Estimates of the heat operator}

The fundamental solutions of the heat and Laplace equations in $\R$ are denoted by $  \Ga$ and $N$, respectively, that is,
\[
 \Gamma(x,t)=\left\{\begin{array}{ll} \vspace{2mm}
 \frac{1}{ (2\pi t)^{\frac{n}{2}}}e^{-\frac{|x|^2}{4t}}&\mbox{if }t>0,\\
 0& \mbox{if }t\leq 0,
 \end{array}\right.  \mbox{ and }     N(x) = \left\{\begin{array}{ll}
 \vspace{2mm}
  \frac{1}{\om_n (2-n)|x|^{n-2}}&\mbox{if }n\geq 3,\\
 \frac{1}{2\pi}\ln |x|&\mbox{if }n=2.\end{array}\right.
 \]

We define heat operators $T_1, T_2,T_1^*$ and $T_2^*$, respectively, as follows:
\begin{align*}
 T_1f(x,t)&=\int^t_{-\infty}\int_{\R}\Gamma(x-y,t-s)f(y,s)dyds, \\
T_2g(x,t)&=\int^t_{-\infty}\int_{\Rn}\Gamma(x'-y',x_n,t-s)g(y',s)dy'ds,\\
T_1^*f(y,s) & = \int^{\infty}_s \int_{\R} \Ga(x-y, t-s) f(x,t) dxdt,\\
T_2^*g(y,t) &=\int^\infty_s \int_{\Rn} \Ga(x'-y', y_n, t-\tau)
g(x', \tau) dx' dt.
\end{align*}

The following estimates for $T_1$ and $T_1^*$ are derived in Appendix \ref{appendixa}.
 \begin{lemm}\label{lemma0115}
 Let $1<p<\infty$  and $ 0< \al < 2$. Then,
\begin{align*}
\| T_1f\|_{\dot B^{\al, \frac{\al}{2}}_p (\R \times {\mathbb R})} + \| T^*_1 f\|_{\dot B^{\al, \frac{\al}{2}}_p (\R \times {\mathbb R})}
&\leq c \| f\|_{\dot B^{\al-2, \frac{\al}{2}-1}_p (\R \times {\mathbb R})}.
\end{align*}
\end{lemm}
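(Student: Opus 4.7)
The plan is to recognize $T_1$ as the full-space Duhamel operator, so that $T_1 f$ solves $(\partial_t - \Delta)(T_1 f) = f$ on $\mathbb{R}^n \times \mathbb{R}$, and then prove the stated bound by real interpolation between the two natural endpoints $\alpha = 0$ and $\alpha = 2$. The upper endpoint is classical parabolic maximal regularity: by Calder\'on--Zygmund theory applied to the parabolic convolution kernels $\partial_t \Gamma$ and $\partial_i \partial_j \Gamma$ (or, equivalently, the de Simon / Lady\v{z}enskaja--Solonnikov--Ural'ceva theorem), one obtains
\[
\|\partial_t T_1 f\|_{L^p(\mathbb{R}^n\times\mathbb{R})} + \|\nabla_x^2 T_1 f\|_{L^p(\mathbb{R}^n\times\mathbb{R})} \leq c\,\|f\|_{L^p(\mathbb{R}^n\times\mathbb{R})},
\]
i.e.\ $T_1 : L^p \to \dot W^{2,1}_p$ for $1 < p < \infty$.

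For the lower endpoint $\alpha = 0$, I would argue by duality. Since $T_1^*$ is obtained from $T_1$ by time reversal, the same Calder\'on--Zygmund argument gives $T_1^* : L^{p'} \to \dot W^{2,1}_{p'}$, hence by transposition $T_1 : (\dot W^{2,1}_{p'})^* \to L^p$. One then identifies a natural embedding of $\dot B^{-2,-1}_p$ into $(\dot W^{2,1}_{p'})^*$. Concretely, using the paper's definition $\dot B^{-2,-1}_p(\mathbb{R}^n\times\mathbb{R}) = L^p(\mathbb{R};\dot B^{-2}_p(\mathbb{R}^n)) + L^p(\mathbb{R}^n;\dot B^{-1}_p(\mathbb{R}))$ together with the representation of negative-order Besov norms as derivatives of $L^p$ data, I would decompose any $f \in \dot B^{-2,-1}_p$ as $f = \Delta F_1 + \partial_t F_2$ with $F_1, F_2 \in L^p$ and norms controlled by the $\dot B^{-2,-1}_p$ norm of $f$; then $T_1 f = \Delta T_1 F_1 + \partial_t T_1 F_2$ and its $L^p$ norm is bounded by the upper-endpoint maximal regularity applied to $F_1, F_2$. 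This yields $T_1 : \dot B^{-2,-1}_p \to L^p$.

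To close, I would invoke Proposition \ref{prop2}(1), which gives the real-interpolation identities
\[
\dot B^{\alpha,\alpha/2}_p = (L^p,\dot W^{2,1}_p)_{\alpha/2,\,p}, \qquad \dot B^{\alpha-2,\alpha/2-1}_p = (\dot B^{-2,-1}_p, L^p)_{\alpha/2,\,p}
\]
valid for $0 < \alpha < 2$. Interpolating the two endpoint estimates $T_1 : \dot B^{-2,-1}_p \to L^p$ and $T_1 : L^p \to \dot W^{2,1}_p$ then yields $T_1 : \dot B^{\alpha-2,\alpha/2-1}_p \to \dot B^{\alpha,\alpha/2}_p$, which is the stated inequality. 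The bound for $T_1^*$ follows by the identical argument after time-reversing the kernel $\Gamma(\cdot\,,\,t) \mapsto \Gamma(\cdot\,,\,-t)$; every step is invariant under this reflection.

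The delicate part of this programme is the lower endpoint: identifying the correct negative-order space. The sum-space definition of $\dot B^{-2,-1}_p$, combined with the (mild) mismatch between anisotropic Besov and Sobolev spaces at negative regularity, means that one cannot simply invoke duality of $\dot W^{2,1}_{p'}$ verbatim; the decomposition $f = \Delta F_1 + \partial_t F_2$ must be carried out in each of the two summands of $\dot B^{-2,-1}_p$ separately, and one must track that only the ``right'' derivatives appear so that maximal regularity applies. The upper endpoint and the interpolation step are, by contrast, standard once the lower endpoint is in hand.
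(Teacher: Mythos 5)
Your overall architecture is exactly the paper's: the endpoint $T_1,T_1^*:L^p\to \dot W^{2,1}_p$ (Lady\v{z}enskaja--Solonnikov--Ural'ceva maximal regularity for $T_1$, and a Fourier multiplier argument with symbol $(|\xi|^2-i\eta)^{-1}$ for $T_1^*$ in the paper), a negative-order endpoint obtained by transposition using that $T_1^*$ is the adjoint of $T_1$, and real interpolation via Proposition \ref{prop2}(1). In that sense the proposal matches the paper's proof in Appendix \ref{appendixa}.

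The one place where your version, as written, would fail is precisely the step you flag as delicate. You insist on taking $\dot B^{-2,-1}_p$ as the lower endpoint and propose to decompose $f=\Delta F_1+\partial_t F_2$ with $F_1,F_2\in L^p$ and norms controlled by $\|f\|_{\dot B^{-2,-1}_p}$. Such a decomposition with $L^p$ control is equivalent to the embedding $\dot B^{-2,-1}_{p}\hookrightarrow \dot W^{-2,-1}_p$, and since $\dot W^{-2}_p=\dot F^{-2}_{p,2}$ while $\dot B^{s}_{p,p}\hookrightarrow \dot F^{s}_{p,2}$ only for $p\le 2$, this step is false for $p>2$. The paper avoids the issue entirely by \emph{not} converting to a Besov space at the endpoint: adjointness directly yields $T_1:\dot W^{-2,-1}_p\to L^p$ (its \eqref{T2}) and $T_1^*:\dot W^{-2,-1}_p\to L^p$ (its \eqref{T^*2}), and then real interpolation of the pair $\bigl(\dot W^{-2,-1}_p, L^p\bigr)$ against $\bigl(L^p,\dot W^{2,1}_p\bigr)$ produces the Besov spaces $\dot B^{\al-2,\frac{\al}{2}-1}_p$ and $\dot B^{\al,\frac{\al}{2}}_p$ for $0<\al<2$ on its own. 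So the fix is simply to drop the decomposition, keep the Sobolev scale at both endpoints, and let the interpolation functor do the work; with that change your argument is the paper's.
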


The following estimates for $T_2$ and $T_2^*$ are derived in Appendix \ref{appendixb}.
\begin{lemm}
\label{lem-T}
 Let $1<p<\infty$ and $ 0< \al < 2$.
Then,
\begin{align*}
\| T_2g \|_{\dot{B}^{\al,\frac{\al}{2}}_q(\R_+ \times {\mathbb R})}+\| T_2^*g \|_{\dot{B}^{\al,\frac{\al}{2}}_q(\R_+ \times {\mathbb R})} & \leq c \| g\|_{\dot{B}^{\al-1-\frac{1}{q},\frac{\al-1}{2}-\frac{1}{2q}}_q(\Rn \times {\mathbb R})}.
\end{align*}
\end{lemm}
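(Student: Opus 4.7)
The plan is to reduce the estimate for $T_2^*$ to that for $T_2$ by a time-reversal symmetry, and then to handle $T_2$ by reducing it to the whole-space heat operator $T_1$ governed by Lemma \ref{lemma0115}. The bulk of the work becomes a tensor-product estimate of $g$ against a boundary delta, which I would establish by duality against an anisotropic trace theorem.

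First, I would observe that $T_2^* g(y,t) = T_2 \ti{g}(y,-t)$ where $\ti{g}(x',\tau) := g(x',-\tau)$. Because both the target and source Besov norms are invariant under reflection in time, the estimate for $T_2^*$ is equivalent to that for $T_2$, so it suffices to treat $T_2$.

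Second, I would exploit the formal identity $T_2 g = T_1 \bigl(g \otimes \de_{x_n=0}\bigr)$, where the tensor product is the distribution on $\R \times {\mathbb R}$ defined by
\[
\langle g \otimes \de_{x_n=0}, \va \rangle := \int_{\mathbb R} \int_{\Rn} g(x',t)\, \va(x',0,t)\, dx'\, dt.
\]
Combining the restriction inequality $\|\cdot\|_{\dot B^{\al,\al/2}_q(\R_+ \times {\mathbb R})} \le \|\cdot\|_{\dot B^{\al,\al/2}_q(\R \times {\mathbb R})}$ with Lemma \ref{lemma0115} yields
\[
\|T_2 g\|_{\dot B^{\al,\al/2}_q(\R_+ \times {\mathbb R})} \le c\, \|g \otimes \de_{x_n=0}\|_{\dot B^{\al-2,\al/2-1}_q(\R \times {\mathbb R})}.
\]
The proof then reduces to the tensor-product estimate
\[
\|g \otimes \de_{x_n=0}\|_{\dot B^{\al-2,\al/2-1}_q(\R \times {\mathbb R})} \le c\,\|g\|_{\dot B^{\al-1-1/q,(\al-1)/2-1/(2q)}_q(\Rn \times {\mathbb R})}.
\]

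Third, I would establish this last inequality by dualizing the trace theorem. By Proposition \ref{prop2}~(4), the trace $\tau: f \mapsto f|_{x_n=0}$ is bounded from $\dot B^{2-\al,(2-\al)/2}_{q'}(\R \times {\mathbb R})$ into $\dot B^{2-\al-1/q',(2-\al-1/q')/2}_{q'}(\Rn \times {\mathbb R})$ whenever $2-\al > 1/q'$, i.e.\ $\al < 1 + 1/q$. Since the adjoint of $\tau$ is precisely the tensor-product map $g \mapsto g \otimes \de_{x_n=0}$, and because the relevant Besov spaces are reflexive in this range with the identifications $(\dot B^{s,s/2}_{q'})' = \dot B^{-s,-s/2}_q$, taking adjoints yields the desired tensor-product estimate in the range $0 < \al < 1 + 1/q$.

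The main obstacle is extending the estimate to the remaining range $1+1/q \le \al < 2$, where the trace hypothesis fails and direct dualization is not available. To close the gap, I would either (i) interpolate via Proposition \ref{prop2}~(1) between the duality-based estimate at some $\al_0 \in (0, 1+1/q)$ and a classical Solonnikov-type parabolic estimate for the heat operator with boundary data at $\al$ near $2$, or (ii) exploit the commutations $\pa_{x_i} T_2 g = T_2 \pa_{x_i} g$ for $1 \le i \le n-1$ and $\pa_t T_2 g = T_2 \pa_t g$, which allow trading one tangential or two half-time derivatives to lower $\al$ into the already-handled range. Verifying that these commutation/interpolation steps respect the anisotropic scaling (both sides of the target inequality have the same parabolic homogeneity $\al - (n+2)/q$) is the technical heart of the argument.
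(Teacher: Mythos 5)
Your argument is essentially sound and, at its core, coincides with the paper's: the identity $T_2g=T_1(g\otimes\de_{x_n=0})$ tested against $\phi$ is exactly the paper's duality identity \eqref{1027-1}, your ``adjoint of the trace'' step is the paper's combination of Proposition \ref{prop2}~(4) with the $\dot W^{2,1}_{q'}$ bound \eqref{T^*1} for $T_1^*\phi$, and both proofs close with real interpolation against the Solonnikov endpoint \eqref{T_21}. The differences are two. First, the paper does not try to push the duality argument up to $\al<1+\frac1q$; it extracts only the $L^q$ endpoint \eqref{mathcalT1} and interpolates once with \eqref{T_21}, which covers all of $0<\al<2$ in one stroke. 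Your option (i) is precisely this, so the gap you flag at $1+\frac1q\le\al<2$ is closed the moment you adopt it; I would drop option (ii), since ``trading a tangential or time derivative'' requires a lifting/realization of $g$ as a derivative of an element of a higher-order homogeneous anisotropic Besov space, which is exactly the kind of technical point that is easier to avoid than to verify, and is unnecessary here. (Your self-diagnosis of why the tensor-product route alone cannot reach $\al\ge 1+\frac1q$ is correct: the single-layer potential $T_1(g\otimes\de_{x_n=0})$ has a jump in the normal derivative across $x_n=0$, so the global estimate on $\R\times{\mathbb R}$ genuinely fails there and one must restrict to the half space and use the boundary-value endpoint estimate.) Second, your time-reversal reduction $T_2^*g(y,s)=T_2\tilde g(y,-s)$ with $\tilde g(x',\tau)=g(x',-\tau)$ is correct (the anisotropic norms on ${\mathbb R}^{n-1}\times{\mathbb R}$ and $\R_+\times{\mathbb R}$ are invariant under $t\mapsto -t$) and is a genuine simplification over the paper, which instead re-derives the $L^q$ bound for $T_2^*$ by a second duality argument and obtains the $\dot W^{2,1}_q$ bound \eqref{mathcalT1-4} via the commutation $D_sT_2^*g=T_2^*(D_sg)$ together with elliptic regularity for $\De_yT_2^*g=-D_sT_2^*g$. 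Your route buys a shorter treatment of $T_2^*$; the paper's buys independence from the reflection symmetry, which would matter on a finite time interval but not on all of ${\mathbb R}$ as here.
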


The following estimates for $\Gamma_t*h:=\int_{\R}\Gamma(x-y,t)h(y)dy$ and $\Gamma_t*h|_{x_n=0}:=\int_{\R}\Gamma(x'-y',y_n,t)h(y)dy$ are derived in Appendix \ref{appendixc}.
\begin{lemm}
\label{proheat1}
Let $1<q<\infty$  and $ 0< \al < 2$.
Then,
\begin{align*}
\|\Gamma_t*h\|_{\dot B^{\al,\frac{\al}{2}}_q(\R\times {\mathbb R}_+)}\leq
c \|h\|_{\dot B^{\alpha-\frac{2}{q}}_q(\R)}.
\end{align*}

Moreover, $\Gamma_t*h|_{x_n=0}\in \dot B^{\al-\frac{1}{q},\frac{\al}{2}-\frac{1}{2q}}_q(\Rn\times {\mathbb R}_+)$ with  \begin{align*}
\|\Gamma_t*h|_{x_n=0}\|_{\dot B^{\al-\frac{1}{q},\frac{\al}{2}-\frac{1}{2q}}_q(\Rn\times {\mathbb R}_+)}\leq
c \|h\|_{\dot B^{\alpha-\frac{2}{q}}_q(\R)}.
\end{align*}
\end{lemm}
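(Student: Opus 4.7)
Setting $v(x,t) := \Ga_t * h(x) = e^{t\De} h(x)$, I would prove the first inequality directly and deduce the second as a corollary of the trace theorem Proposition \ref{prop2}(4) applied to $v$ at $\{x_n = 0\}$: once $v \in \dot B^{\al,\al/2}_q(\R \times {\mathbb R}_+)$, its trace lies in $\dot B^{\al - 1/q, \al/2 - 1/(2q)}_q(\Rn \times {\mathbb R}_+)$ with the stated bound. So the real task is showing $\|v\|_{\dot B^{\al, \al/2}_q(\R \times {\mathbb R}_+)} \leq c \|h\|_{\dot B^{\al - 2/q}_q(\R)}$.

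I would split this norm using Proposition \ref{prop2}(2) together with Remark \ref{rem0208} into
$$
\|v\|_{\dot B^{\al, \al/2}_q(\R \times {\mathbb R}_+)} \approx \|v\|_{L^q({\mathbb R}_+; \dot B^\al_q(\R))} + \|v\|_{L^q(\R; \dot B^{\al/2}_q({\mathbb R}_+))}.
$$
The spatial part yields to a standard Littlewood--Paley argument: decomposing $h = \sum_j \dot\De_j h$ homogeneously in frequency, the smoothing estimate $\|e^{t\De}\dot\De_j h\|_{L^q(\R)} \leq c\, e^{-ct 2^{2j}} \|\dot\De_j h\|_{L^q(\R)}$ combined with $\int_0^\infty e^{-cqt\, 2^{2j}}\,dt \approx 2^{-2j}$ gives
$$
\int_0^\infty \|v(\cdot,t)\|_{\dot B^\al_q(\R)}^q\, dt \leq c\sum_j 2^{j(\al q - 2)} \|\dot\De_j h\|_{L^q(\R)}^q = c \|h\|_{\dot B^{\al - 2/q}_q(\R)}^q.
$$

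For the temporal part I would appeal to real interpolation via Proposition \ref{prop2}(1): after verifying the endpoint bounds $\|v\|_{L^q(\R \times {\mathbb R}_+)} \leq c\|h\|_{\dot B^{-2/q}_q(\R)}$ (by duality against the opposite endpoint) and $\|v\|_{\dot W^{2,1}_q(\R \times {\mathbb R}_+)} \leq c\|h\|_{\dot B^{2 - 2/q}_q(\R)}$ (classical maximal $L^q$-regularity for the Cauchy problem of the heat equation), the interpolation identity $\dot B^{\al, \al/2}_q = (L^q, \dot W^{2,1}_q)_{\al/2, q}$ covers every $0 < \al < 2$ in one stroke and, in particular, controls the temporal part.

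I expect the temporal estimate to be the main obstacle, since the Gagliardo difference characterization of $\dot B^{\al/2}_q({\mathbb R}_+)$ does not collapse to a single frequency-decay estimate in the clean way the spatial part does. A purely direct treatment would require writing $v(t) - v(s) = \int_s^t \De v(\tau)\,d\tau$ and summing Littlewood--Paley blocks, carefully balancing the $2^{2j}$ factor from $\De v$ against the exponential decay $e^{-c\tau 2^{2j}}$. The interpolation route bypasses this bookkeeping at the cost of invoking the classical $\al = 2$ maximal regularity result, and keeps the exposition short.
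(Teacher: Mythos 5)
Your treatment of the first inequality is essentially the paper's own argument: the two endpoints $\|\Gamma_t*h\|_{\dot W^{2,1}_q(\R\times{\mathbb R}_+)}\le c\|h\|_{\dot B^{2-2/q}_q(\R)}$ and $\|\Gamma_t*h\|_{L^q(\R\times{\mathbb R}_+)}\le c\|h\|_{\dot B^{-2/q}_q(\R)}$ (the latter by duality, pairing against $T_1^*\phi|_{s=0}$ and using the trace-at-$t=0$ estimate of Proposition \ref{prop2}(5)), followed by real interpolation. The Littlewood--Paley computation for the spatial part is correct but redundant once you interpolate, since the interpolation already controls the full anisotropic norm.

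The genuine gap is in the second inequality. You propose to obtain the trace bound as a corollary of Proposition \ref{prop2}(4), but that trace theorem requires $\al>\frac1q$, while the lemma is asserted for all $0<\al<2$ and $1<q<\infty$; the range $0<\al\le\frac1q$ is nonempty (e.g.\ $q=2$, $\al=\frac13$) and is precisely where the target space $\dot B^{\al-\frac1q,\frac{\al}2-\frac1{2q}}_q(\Rn\times{\mathbb R}_+)$ has negative smoothness, so there is no pointwise trace to take and the restriction $\Gamma_t*h|_{x_n=0}$ must itself be defined and estimated by duality. The paper handles this by writing $\langle \Gamma_t*h|_{x_n=0},\phi\rangle=\langle h, T_2^*\phi|_{s=0}\rangle$ for $\phi\in C^\infty_0(\Rn\times{\mathbb R})$, invoking Lemma \ref{lem-T} to place $T_2^*\phi$ in $\dot B^{-\al+2,-\frac{\al}2+1}_{q'}(\R_+\times{\mathbb R})$, taking its trace at $s=0$ via Proposition \ref{prop2}(5), and then covering the borderline $\al=\frac1q$ by interpolation between the two regimes. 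Without an argument of this kind your proof only establishes the trace estimate for $\al>\frac1q$, so the proposal as written does not prove the stated lemma.
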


The following estimates for $D_x\Gamma *f:=\int^t_0\int_{\R}D_x \Ga(x-y,t-s)f(y,s)dyds$ and $D_x\Gamma*f|_{x_n=0}:=\int^t_0\int_{\R}D_x \Ga(x'-y',y_n,t-s)f(y,s)dyds$ are derived in
 Appendix \ref{appendixd}.
%
%
%
%
%
%
%
%
\begin{lemm}
\label{propheat2-1}
Let $1< q<\infty$ and $0<\al<2$. Further, let
  $f\in L^p({\mathbb R}_+;\dot{B}^\be_p(\R))
$ for some $(\be, p)$ with $p\leq q$, $0<\be<\al\leq \be+1<2$, and $1-\al+\be-(n+2)(\frac{1}{p}-\frac{1}{q})=0$.
Then, $D_x\Ga*f\in {B}^{\al,\frac{\al}{2}}_{q(0)}(\R\times {\mathbb R}_+)$ with
\begin{align*}
\|D_x\Ga*f\|_{\dot{B}^{\al,\frac{\al}{2}}_{q}(\R\times {\mathbb R}_+)}
&\leq
c \|f\|_{L^{p}({\mathbb R}_+;\dot B^\be_p(\R))}.
\end{align*}

Moreover, if $\al+\frac{n+1}{p}-\frac{n+2}{q}>0$, then $D_x\Ga*f|_{x_n=0}\in \dot B^{\al-\frac{1}{q},\frac{\al}{2}-\frac{1}{2q}}_{q(0)}(\Rn\times {\mathbb R}_+)$
with
\begin{align*}
\|D_x\Ga*f|_{x_n=0}\|_{ \dot B^{\al-\frac{1}{q},\frac{\al}{2}-\frac{1}{2q}}_{q(0)}(\Rn\times ({\mathbb R}_+ ))}
\leq c \|f\|_{L^{p}({\mathbb R}_+;\dot B^\be_{p}(\R))}.
\end{align*}

\end{lemm}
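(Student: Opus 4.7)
The plan is to first bound $D_x\Ga*f$ in the $(p,\be+1)$ anisotropic Besov space via Lemma~\ref{lemma0115}, and then pass to the target $(q,\al)$ scale through the anisotropic Sobolev embedding in Proposition~\ref{prop2}(3). The bridge between these two scales is the scaling identity $1-\al+\be-(n+2)(\frac{1}{p}-\frac{1}{q})=0$, which rearranges to $\be+1-\frac{n+2}{p}=\al-\frac{n+2}{q}$; combined with $\al\leq\be+1$ and $p\leq q$, this exactly supplies the hypotheses of that embedding.

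To execute the first step I would let $\tilde f$ denote the zero extension of $f$ to $\R\times{\mathbb R}$ and use commutativity of $D_x$ with heat convolution to write $D_x\Ga*f=T_1(D_x\tilde f)$. Since $\al\leq\be+1<2$ forces $\be<1$, the index $\be+1$ lies in $(0,2)$, so Lemma~\ref{lemma0115} yields
\[
\|D_x\Ga*f\|_{\dot B^{\be+1,(\be+1)/2}_p(\R\times{\mathbb R})}\leq c\,\|D_x\tilde f\|_{\dot B^{\be-1,(\be-1)/2}_p(\R\times{\mathbb R})}.
\]
Because $\be-1<0$, the space on the right coincides with the sum $L^p({\mathbb R};\dot B^{\be-1}_p(\R))+L^p(\R;\dot B^{(\be-1)/2}_p({\mathbb R}))$, so it suffices to bound it by its spatial component alone: $\|D_x\tilde f\|_{L^p({\mathbb R};\dot B^{\be-1}_p(\R))}\leq c\|\tilde f\|_{L^p({\mathbb R};\dot B^\be_p(\R))}=c\|f\|_{L^p({\mathbb R}_+;\dot B^\be_p(\R))}$. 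The anisotropic Sobolev embedding now delivers the desired $\dot B^{\al,\al/2}_q$ estimate, and since $\tilde f$ and $\Ga$ both vanish for $t<0$, so does $D_x\Ga*f$; this support property places the restriction to $\R\times{\mathbb R}_+$ inside the $(0)$-completion.

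For the trace on $x_n=0$ I would apply Proposition~\ref{prop2}(4) at regularity $\be+1$. The hypothesis $\al+\frac{n+1}{p}-\frac{n+2}{q}>0$ is, via the scaling identity, equivalent to $\be+1-\frac{1}{p}>0$, which is exactly the regularity threshold the trace theorem requires, giving
\[
\|D_x\Ga*f|_{x_n=0}\|_{\dot B^{\be+1-1/p,(\be+1)/2-1/(2p)}_p(\Rn\times{\mathbb R})}\leq c\,\|D_x\Ga*f\|_{\dot B^{\be+1,(\be+1)/2}_p(\R\times{\mathbb R})}.
\]
A second anisotropic Sobolev embedding, this time on $\Rn\times{\mathbb R}$ (so with $n$ replaced by $n-1$), lowers the indices to $(q,\al-\frac{1}{q})$; the requisite scaling equality $\be+1-\frac{1}{p}-\frac{n+1}{p}=\al-\frac{1}{q}-\frac{n+1}{q}$ reduces to the same identity as before, while $\be+1-\frac{1}{p}\geq\al-\frac{1}{q}$ follows from $p\leq q$. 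The support property in $\{t\geq 0\}$ passes to the trace, yielding membership in the $(0)$-completion. The main obstacle is making sure Lemma~\ref{lemma0115} is applied within its admissible range $(0,2)$ and that the sum-space structure at the negative index $\be-1$ lets us avoid controlling any time regularity of $f$ beyond $L^p$; both checks reduce to the constraint $\be<1$, which is built into the stated hypotheses $\al\leq\be+1<2$.
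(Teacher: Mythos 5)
Your argument is correct, and it reaches the same two intermediate milestones as the paper --- the estimate of $D_x\Ga*f$ in $\dot B^{\be+1,\frac{\be+1}{2}}_p(\R\times{\mathbb R})$ followed by the anisotropic Sobolev embedding to the $(q,\al)$ scale --- but it gets to each of them by a somewhat different route. For the interior bound, the paper derives two fresh endpoint estimates ($\dot W^{2,1}_p$ from $L^p({\mathbb R}_+;\dot W^1_p)$ and $\dot W^{1,\frac12}_p$ from $L^p({\mathbb R}_+;L^p)$) and real-interpolates them; you instead feed $D_x\tilde f$ directly into the already-interpolated Lemma \ref{lemma0115} at the negative index $\be-1$ and exploit the sum-space structure of $\dot B^{\be-1,\frac{\be-1}{2}}_p$ to pay only for spatial regularity of $f$. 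This is slightly slicker and avoids re-deriving endpoint estimates, at the cost of leaning on the definitional identification of the negative-order anisotropic space as a sum. For the trace, the paper splits into $\al>\frac1q$ (trace directly at the $(q,\al)$ level) and $\al\le\frac1q$ (introduce an auxiliary exponent $r\in(p,q)$ with $\ga=\al+\frac{n+2}{r}-\frac{n+2}{q}>\frac1r$, trace at $(r,\ga)$, then embed on the boundary); you handle all cases uniformly by tracing at the $(p,\be+1)$ level --- which is exactly the limiting case $r=p$, $\ga=\be+1$ of the paper's second branch, admissible precisely because the hypothesis $\al+\frac{n+1}{p}-\frac{n+2}{q}>0$ is equivalent to $\be+1>\frac1p$ --- and then embedding on $\Rn\times{\mathbb R}$ with parabolic dimension $n+1$. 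Your unified treatment is cleaner; just note that the monotonicity $\be+1-\frac1p\ge\al-\frac1q$ needed for that boundary embedding uses the scaling identity as well as $p\le q$ (the difference equals $(n+1)(\frac1p-\frac1q)$), not $p\le q$ alone.
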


\section{\bf Stokes equations \eqref{maineq-stokes} with $f=0$ and $h=0$  and $g_n=0$}

\label{zero}

\setcounter{equation}{0}

Let
\begin{align*}
K_{ij}(x,t) & = -2 \delta_{ij}D_{x_n}  \Ga(x,t)  +4 D_{x_j}\int_0^{x_n} \int_{\Rn}  D_{z_n}  \Ga(z,t)  D_{x_i} N(x-z)  dz.
\end{align*}
In \cite{sol1}, an explicit formula was formulated for the solution $w$ of the Stokes equations \eqref{maineq-stokes} with $f =0$, $h =0$, and boundary data $g=(g', 0)$ by

\begin{align}\label{simple}
w_i(x,t)& = \sum_{j=1}^{n-1}\int_0^t \int_{\Rn} K_{ij}( x'-y',x_n,t-s)g_j(y',s) dy'ds.
\end{align}
  \begin{theo}\label{Rn-1}
Let $0< \alpha < 2$ and $1 < q < \infty$.
 In addition, let $w$ be the vector field defined by  \eqref{simple} for $g \in \dot B^{\al-\frac{1}{q},\frac12\al-\frac{1}{2q} }_{q(0)} ({\mathbb R}^{n-1} \times {\mathbb R}_+)
$ with $g_n=0$.
Then, $w\in \dot  {B}^{\al,\frac\al 2 }_{q}({\mathbb R}^n_+\times {\mathbb R}_+)$ with
\begin{align*}
\| w\|_{ \dot {B}^{\al,\frac\al 2 }_{q}({\mathbb R}^n_+\times {\mathbb R}_+)}
 \leq
c \|g\|_{ \dot B^{\al-\frac{1}{q},\frac\al 2-\frac{1}{2q}}_{q(0)}({\mathbb
R}^{n-1} \times {\mathbb R}_+)}.
\end{align*}

\end{theo}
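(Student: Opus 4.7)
My plan is to exploit the explicit Solonnikov representation \eqref{simple} and split the kernel $K_{ij}$ into two structurally different pieces, each amenable to the preliminary estimates in Section~\ref{preliminary}. First I zero-extend $g$ from $\Rn\times{\mathbb R}_+$ to $\Rn\times{\mathbb R}$, which preserves the norm by Remark~\ref{rem0101-1}(1) since $g\in\dot B^{\al-\frac1q,\frac\al 2-\frac1{2q}}_{q(0)}$. Write $K_{ij}=K_{ij}^{(1)}+K_{ij}^{(2)}$ with $K_{ij}^{(1)}=-2\delta_{ij}D_{x_n}\Gamma$ and $K_{ij}^{(2)}$ the double-integral term, and set $w=w^{(1)}+w^{(2)}$ accordingly. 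Because $g_n=0$, the first piece reduces to $w^{(1)}_i=-2 D_{x_n}T_2 g_i$ for $i\le n-1$ and $w^{(1)}_n\equiv 0$, so the heart of Step~1 is to prove
\begin{align*}
\|D_{x_n}T_2 g\|_{\dot B^{\al,\frac\al 2}_q(\R_+\times{\mathbb R})}\le c\,\|g\|_{\dot B^{\al-\frac1q,\frac\al 2-\frac1{2q}}_q(\Rn\times{\mathbb R})}.
\end{align*}
When $\al<1$ this follows by applying Lemma~\ref{lem-T} at the shifted index $\al+1$ to get $T_2 g\in\dot B^{\al+1,\frac{\al+1}2}_q$ and then invoking the Besov derivative rule in parabolic scaling; for $\al\ge 1$ I interpolate between this endpoint and the dual $T_2^*$-bound of Lemma~\ref{lem-T} via Proposition~\ref{prop2}(1), or equivalently use the heat-equation identity $D_{x_n}^2 T_2=(\partial_t-\Delta_{x'})T_2$ to trade one $x_n$-derivative for half a parabolic derivative on the data.

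For $w^{(2)}$, Fubini together with the substitution $w'=y'+z'$ gives
\begin{align*}
w^{(2)}_i(x,t)=4\sum_{j=1}^{n-1}\int_0^{x_n}\!\!\int_{\Rn}\partial_i\partial_j N(x-w)\,V_j(w,t)\,dw,\qquad V_j:=D_{w_n}T_2 g_j.
\end{align*}
Zero-extending $V_j$ across $w_n=0$ to obtain $\bar V_j$ and writing $\int_0^{x_n}=\int_{{\mathbb R}}-\int_{x_n}^\infty$, the full-line piece is the Calder\'on--Zygmund operator $\partial_i\partial_jN*\bar V_j$, bounded on $\dot B^{\al,\frac\al 2}_q(\R\times{\mathbb R})$ by (the anisotropic analogue of) Proposition~\ref{propriesz2}; the tail $\int_{x_n}^\infty$ is controlled by a reflection argument exploiting the decay of $D^2N$. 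Inserting the Step~1 bound $\|V_j\|_{\dot B^{\al,\frac\al 2}_q(\R_+\times{\mathbb R})}\le c\|g\|_{\dot B^{\al-\frac1q,\frac\al 2-\frac1{2q}}_q}$ closes the $w^{(2)}$ estimate, and restricting the combined bound to $\R_+\times{\mathbb R}_+$ yields Theorem~\ref{Rn-1}.

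The main obstacle is twofold. First, the $D_{x_n}T_2$ bound at the upper endpoint $\al\ge 1$ is not a direct corollary of Lemma~\ref{lem-T} and needs the interpolation/duality step sketched above; the constraint $0<\al<2$ in Lemma~\ref{lem-T} forbids the naive shift by $+1$ in this range. Second, zero-extending $V_j$ across $w_n=0$ creates a trace jump when $\al>1/q$, so the Calder\'on--Zygmund step must be paired with a boundary correction derived from the trace $V_j|_{w_n=0}$, controlled via the trace estimate Proposition~\ref{prop2}(4) and the anisotropic Besov characterization Proposition~\ref{prop2}(2). These bookkeeping issues, rather than any single hard estimate, are where the real technical work lies.
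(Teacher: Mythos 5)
Your overall skeleton coincides with the paper's: zero-extend $g$ in time via Remark \ref{rem0101-1}(1), split $w$ according to the two pieces of the kernel $K_{ij}$, and reduce everything to the single estimate $\|D_{x_n}T_2\tilde g\|_{\dot B^{\al,\al/2}_q(\R_+\times{\mathbb R})}\le c\|\tilde g\|_{\dot B^{\al-1/q,\al/2-1/(2q)}_q(\Rn\times{\mathbb R})}$. You are also right to flag that this bound is not literally an instance of Lemma \ref{lem-T} when $\al\ge 1$, since the naive shift $\al\mapsto\al+1$ leaves the stated range $(0,2)$; the paper applies it without comment, and your proposed repair --- interpolating between the $L^q$ and $\dot W^{2,1}_q$ endpoints, which is exactly how the lemma itself is established in Appendix \ref{appendixb} --- is the right one.

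Where you genuinely diverge, and where the gap lies, is the double-integral piece. The paper never extends $V_j=D_{x_n}T_2\tilde g_j$ across $x_n=0$: it writes that piece as $4\partial_{x_i}{\mathcal S}\big(\sum_j\partial_{x_j}V_j\big)$ plus the explicit Riesz-transform term $-4\delta_{in}\sum_j R'_jV_j$ (the boundary contribution you suppress by moving $\partial_{x_i}$ inside the $\int_0^{x_n}$, which your displayed formula for $w^{(2)}_i$ omits), observes that ${\mathcal S}f$ solves the half-space Dirichlet problem $\Delta{\mathcal S}f=\mathrm{div}\,F$, ${\mathcal S}f|_{x_n=0}=0$ with $F$ built from $V$ and $R'V$, and then obtains $\|D_x{\mathcal S}f\|_{\dot B^{\al,\al/2}_q}\le c\|F\|_{\dot B^{\al,\al/2}_q}$ from the Agmon--Douglis--Nirenberg estimates at the integer levels together with real interpolation (Proposition \ref{prop2}(1)). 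This stays entirely in $\R_+$ and sees no trace jump. Your route --- zero-extend $V_j$, split $\int_0^{x_n}=\int_{\mathbb R}-\int_{x_n}^{\infty}$, Calder\'on--Zygmund bound for the whole-space piece, a ``reflection argument'' for the tail, and a ``boundary correction'' for the jump when $\al>1/q$ --- leaves precisely the hard parts unproven: the tail operator is itself a singular integral with kernel $\partial_i\partial_jN$ decaying only like $|x|^{-n}$, and its boundedness on $\dot B^{\al,\al/2}_q(\R_+\times{\mathbb R})$ follows from nothing in Section \ref{preliminary}; the trace-jump correction is named but never constructed. These are not bookkeeping issues but the substance of the estimate, and I would replace that block with the elliptic boundary-value argument.
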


\begin{proof}

From (1) of Remark \ref{rem0101-1}, the zero extension  $\tilde{g}$ of $g$ satisfies
\[ \|\tilde{g}\|_{\dot B^{\al-\frac{1}{q},\frac{\al}{2}-\frac{1}{2q}}_{q}(\Rn\times {\mathbb R})}\leq c\|g\|_{\dot B^{\al-\frac{1}{q},\frac{\al}{2}-\frac{1}{2q}}_{q(0)}(\Rn\times {\mathbb R}_+)}.
\]

$w$ can be rewritten through the following form
\begin{equation}\label{C60K-april7}
w_i = -D_{x_n} T_2 \tilde{G}_i   -4\delta_{in}   \Big(\sum_{j=1}^{n-1} R'_jD_{x_n} T_2\tilde{g}_j\Big)+ 4
    \frac{\partial}{\partial x_i} {\mathcal S}\Big(\sum_{j=1}^{n-1} \frac{\pa}{\partial x_j}D_{x_n} T_2\tilde{g}_j\Big),
\end{equation}
  $ i=1,\cdots,n$, where $R'=(R_1',\cdots,R_{n-1}')$ is the $n-1$ dimensional Riesz operator and
${\mathcal S}$ is defined by
\begin{align}
{\mathcal S}f (x) &:= \int_0^{x_n} \int_{{\mathbb R}^{n-1}} N(x-y) f (y) dy.
\end{align}


%
Based on the property of the Riesz operator,
 we have
\begin{align}
\label{v-31}
 \|   \sum_{j=1}^{n-1}R'_j D_{x_n} T_2 \tilde{g}_j \|_{\dot {B}^{\al,\frac{\al}{2}}_{q} ({\mathbb R}^n_+\times {\mathbb R})}  &  \leq c
\sum_{j=1}^{n-1}  \| D_{x_n} T_2 \tilde{g}_j
\|_{\dot {B}^{\al,\frac{\al}{2}}_{q} ({\mathbb R}^{n}_+\times {\mathbb R})}.
\end{align}

Let $f=\sum_{j=1}^{n-1} \frac{\pa}{\partial x_j}D_{x_n} T_2\tilde{g}_j$.
Direct computation also shows that
${\mathcal S}f$ solves
\begin{equation}\label{CK100-april7}
\De {\mathcal S}f(t)=
\mbox{div}\,F(t)
      \mbox{ in }\,\,{\mathbb R}^n_+\mbox{ for each }t>0,\quad {\mathcal S}f|_{x_n=0}=0,
\end{equation}
where
\begin{equation}\label{C100K-april7}
F_j: = -\frac12D_{x_n} T_2\tilde{g}_j,\qquad j=1\cdots, n-1,
\quad
F_n: = \sum_{j=1}^{n-1} R'_j D_{x_n} T_2\tilde{g}_j(x,t).
\end{equation}
According to the well-known result for the elliptic partial differential equation \cite{adn, adn1},
 solution ${\mathcal S}f$ of the Laplace equation \eqref{CK100-april7} satisfies the estimate as
\begin{equation}
\label{e1}
\|D_x{\mathcal S}f\|_{ L^q(\R_+ \times {\mathbb R}_+)}\leq \|{\mathcal S}f\|_{L^q (0, \infty; \dot W^{1}_q(\R_+)}\leq c\|F\|_{L^q(\R_+ \times {\mathbb R}_+)}
\end{equation}
and
\begin{equation}
\label{e2}
\|D_x{\mathcal S}f\|_{ L^q (0, \infty; \dot W^{2}_q(\R_+))}\leq \|{\mathcal S}f\|_{L^q (0, \infty; \dot W^{3}_q(\R_+))}\leq c\|F\|_{L^q (0, \infty;\dot W^{2}_q(\R_+))}.
\end{equation}
However, as $D_t{\mathcal S}f$ also satisfies elliptic equation \eqref{CK100-april7} with the right hand side $\mbox{div}D_tF$, we have
\begin{equation}
\label{e3}
\|D_tD_x{\mathcal S}f\|_{ L^q(\R_+ \times {\mathbb R}_+)}\leq \|D_t{\mathcal S}f\|_{L^q (0, \infty; \dot W^{1}_q(\R_+)}\leq c\|D_tF\|_{L^q(\R_+ \times {\mathbb R}_+)}.
\end{equation}
By combining  \eqref{e2} and \eqref{e3}, we obtain
\begin{equation}
\label{e5}
\|D_x{\mathcal S}f\|_{ \dot W^{2,1}_{q}(\R_+\times {\mathbb R})}\leq c\|F\|_{\dot W^{2,1}_{q}(\R_+\times {\mathbb R})}.
\end{equation}
The interpolation of  \eqref{e1} and \eqref{e5} (see (1) of Proposition \eqref{prop2}) gives
\begin{align}
\label{e6}
\|D_x{\mathcal S}f\|_{ \dot B^{\al,\frac{\al}{2}}_{q}(\R_+\times {\mathbb R})}&\leq c\|F\|_{\dot B^{\al,\frac{\al}{2}}_{q}(\R_+\times {\mathbb R})}\leq c\|D_{x_n} T_2\tilde{g}\|_{\dot B^{\al,\frac{\al}{2}}_{q}(\R_+\times {\mathbb R})}
\end{align}
for $0<\al<2.$
Based on  \eqref{C60K-april7}, \eqref{v-31}, and \eqref{e6}, we conclude that
\begin{equation}\label{interpolation-w}
\| w\|_{\dot  B^{\al, \frac{\al}2 }_{q} (\R_+ \times {\mathbb R}_+)} \leq c \|D_{x_n} T_2 \tilde{g}\|_{\dot B^{\al,\frac{\al}{2}}_{q}(\R_+\times {\mathbb R})} \qquad  0<\al<2.
\end{equation}

From Lemma \ref{lem-T}, we have
\begin{align*}
\|D_{x_n} T_2    \tilde{g} \|_{\dot B^{\al,\frac{\al}2}_{q}(\R_+ \times {\mathbb R})} 
\leq
c  \|   \tilde{g} \|_{ \dot B^{\al -\frac1q, \frac{\al}2-\frac1{2q}}_{q}(\Rn \times {\mathbb R})}
 \leq
c  \|   g \|_{ \dot B^{\al -\frac1q, \frac{\al}2-\frac1{2q}}_{q(0)}(\Rn \times  {\mathbb R}_+ )}.
\end{align*}
Hence, the proof of Theorem \ref{Rn-1} is completed.
\end{proof}


\section{\bf Proof of Theorem \ref{thm-stokes}  }
\label{general}
\setcounter{equation}{0}

Let us consider the Stokes equations \eqref{maineq-stokes} with nonhomogeneous data $h, f=\mbox{\rm div}{\mathcal F}, g$.
We represent the solution of Stokes equations \eqref{maineq-stokes} according to four vector fields, $v, V, \nabla \phi$, and $w$ as follows.

%

\subsection{Solution Representation}
\label{decomposition}

Let $\tilde {\mathcal F}$ be the extension of ${\mathcal F}$ over $\R \times {\mathbb R}_+$ such that $\tilde {\mathcal F} \in L^p({\mathbb R}_+; \dot B^\be_p (\R))$ with $\|\tilde{\mathcal F}\|_{ L^p ( {\mathbb R}_+;  \dot B^{\be}_p(\R  ))}\leq c\|{\mathcal F}\|_{ L^p({\mathbb R}_+; \dot B^{\be}_p(\R_+))} $.
Set $\tilde{f}=\mbox{div }\tilde{\mathcal F}.$
Define $V$ by
 \begin{align}
  V(x,t)=\int^t_{0}\int_{\R}\Gamma(x-y,t-s){\mathbb P}\tilde{f}(y,s)dyds.
 \end{align}
  Here, ${\mathbb P}$ is the Helmholtz projection operator defined on $\R$ defined as
\[
[{\mathbb P}\tilde{f}]_j(x,t)=\delta_{ij}\tilde{f}_i+D_{x_i}D_{x_j}\int_{\R}N(x-y)\tilde{f}_i(y,t)dy=\delta_{ij}\tilde{f}_i+R_iR_j\tilde{f}_i.\]
Observe that
 \[
    \mbox{div}\, {\mathbb P}\tilde{f}=0\mbox{ in }\R\times {\mathbb R}_+\mbox{ and } \tilde{f}={\mathbb P}\tilde{f} +\nabla {\mathbb Q}\tilde{f},\]
  where
\[
{\mathbb Q}\tilde{f}=-D_{x_i}\int_{\R}N(x-y)\tilde{f}_i(y,t)dy.\]
    In addition,
  $V$ satisfies the following equations:
 \begin{align}
 \label{heatequations1}
\begin{array}{l}\vspace{2mm}
V_t - \De V  ={\mathbb P}\tilde{f},\ \mbox{div }V=0 \mbox{ in }
 \R\times {\mathbb R}_+.
 \\
\hspace{30mm}V|_{t=0}= 0\mbox{ on }\R.
\end{array}
\end{align}
Furthermore,
$V$ can be rewritten as
\begin{align}
\label{f1}
  V_j(x,t)=-\int^t_{0}\int_{\R}D_{y_k}\Gamma(x-y,t-s)\Big(\delta_{ij}\tilde{F}_{ki}+R_iR_j\tilde{F}_{ki}\Big)(y,s)dyds.
 \end{align}

Let $\tilde{h}\in \dot {B}^{\al-\frac{2}{q}}_{q\sigma}(\R)$ be the solenoidal extension of $h$ with $\|\tilde{h}\|_{ \dot {B}^{\al-\frac{2}{q}}_q(\R)}\leq c\|{h}\|_{ \dot {\mathcal B}^{\al-\frac{2}{q}}_q(\R_+)}$.
We define $v$ as
  \begin{equation}
  \label{f2}
  v(x,t)=\int_{\R}\Gamma(x-y,t) \tilde{h}(y)dy.
  \end{equation}
Note that $v$ satisfies the following equations:
 \begin{align}
 \label{heatequations2}
\begin{array}{l}\vspace{2mm}
v_t - \De v  =0,\ \mbox{div}\,v=0 \mbox{ in }
 \R\times {\mathbb R}_+,\\
\hspace{30mm}v|_{t=0}= \tilde{h} \mbox{ on }\R.
\end{array}
\end{align}
  Next, we define $\phi$ as
 \begin{equation}
 \label{f3}
 \phi(x,t)=2\int_{\Rn}N(x'-y',x_n)\Big(g_n(y',t)-v_n(x',0,t)-V_n(x',0,t)\Big)dy'.
 \end{equation}

 In addition,
 \begin{equation*}
  \Delta \phi=0, \,\,  \nabla \phi|_{x_n=0}=\Big(R'(g_n-v_n|_{x_n=0}-V_n|_{x_n=0}), g_n-v_n|_{x_n=0}-V_n|_{x_n=0}\Big).
 \end{equation*}
Note that $ \nabla\phi|_{t=0}=0$ if $g_n|_{t=0}=h_n|_{x_n=0}$

Let $G=(G',0)$, where
\begin{equation}
\label{G}
G'=(G_1,\cdots, G_{n-1})=g'-v'|_{x_n=0}-V'|_{x_n=0}-R'(g_n-v_n|_{x_n=0}-V_n|_{x_n=0}).
\end{equation}
Note that $G'|_{t=0}=0$ if $g|_{t=0}=h|_{x_n=0}$.
 Let $w$ be the vector field defined using  \eqref{simple} with boundary data $G=(G',0)$ for $G'$, as defined in  \eqref{G}.
 Then,
 \begin{equation}\label{representation-u}
 u=w+\nabla \phi+v+V\mbox{ and } p=r-\phi_t+{\mathbb Q}\tilde{f}
 \end{equation}
  formally satisfies the nonstationary Stokes equations \eqref{maineq-stokes}.%

\subsection{
Estimates of $u=v+V+\nabla \phi+w$}

\label{proof.stokes}

$\bullet$ 
By applying Proposition \ref{tracetheorem} to $V(t) -V(s)$ and $v(t)-v(s)$, we also have
\begin{align}
\label{t11}\| V_n(t)|_{x_n =0}-V_n(s)|_{x_n =0} \|_{\dot{B}^{-\frac1q}_q(\Rn)}\leq c \|V(t)-V(s)\|_{ L^q (\R_+)},\\
 \label{t111} \| v_n(t)|_{x_n =0}-v_n(s)|_{x_n =0} \|_{ \dot{B}^{-\frac1q}_q(\Rn)} \leq c \|v(t)-v(s)\|_{ L^q (\R_+)}.
\end{align}
From \eqref{t11} and \eqref{t111}, we have
\begin{align}
\label{t41}
\|v_n|_{x_n=0}\|_{\dot {B}_q^{\frac{\al }{2}}({\mathbb R}_+;\dot{B}_q^{-\frac{1}{q}}(\Rn))}\leq \|v\|_{\dot {B}_q^{\frac{\al }{2}}({\mathbb R}_+;L^q(\R_+))}=\|v\|_{L^q(\R_+;\dot {B}_q^{\frac{\al }{2}}{\mathbb R}_+)},\\
\label{t42} \|V_n|_{x_n=0}\|_{ \dot B_q^{\frac{\al }{2}}({\mathbb R}_+;\dot{B}_q^{-\frac{1}{q}}(\Rn))}\leq \|V\|_{\dot {B}_q^{\frac{\al }{2}}({\mathbb R}_+;L^q(\R_+))}=\|V\|_{L^q(\R_+;\dot {B}_q^{\frac{\al }{2}}{\mathbb R}_+)}.
\end{align}


$\bullet$
Note that
\begin{align*}
D_{x_n}\phi(x,t)
&=P_{x_n}(g_n-v_n|_{y_n=0}-V_n|_{y_n=0}),\\
D_{x'}\phi(x,t)
&=P_{x_n}R'(g_n-v_n|_{y_n=0}-V_n|_{y_n=0}).
\end{align*}

From Proposition \ref{proppoisson2} and according to the properties of Riesz operator, \eqref{t41} and \eqref{t42}, we have 
\begin{align}\label{naphi}
\notag\|\nabla \phi\|_{\dot {B}^{\al,\frac{\al}{2}}_q(\R_+\times {\mathbb R}_+)}&\leq
\|g_n-v_n|_{x_n=0}-V_n|_{x_n=0}\|_{L^q({\mathbb R}_+;\dot{B}^{\al-\frac{1}{q}}_q(\Rn))}\\
\notag &\quad+c\|g_n-v_n|_{x_n=0}-V_n|_{x_n=0}\|_{\dot {B}_q^{\frac{\al }{2}}({\mathbb R}_+;\dot{B}_q^{-\frac{1}{q}}(\Rn))}\\
\notag&\leq c \Big( \|g_n\|_{L^q({\mathbb R}_+;\dot{B}^{\al-\frac{1}{q}}_q(\Rn))}+\|g_n\|_{\dot {B}_q^{\frac{\al }{2}}({\mathbb R}_+;\dot{B}_q^{-\frac{1}{q}}(\Rn))}\\
&\quad+\|v\|_{\dot {B}^{\al,\frac{\al}{2}}_q(\R_+\times {\mathbb R}_+)}+\|V\|_{\dot {B}^{\al,\frac{\al}{2}}_q(\R_+\times {\mathbb R}_+)} \Big).
\end{align}

%

From Lemma \ref{proheat1} and Lemma \ref{propheat2-1},
 we have $v|_{x_n=0} \in  \dot  B^{\al-\frac{1}{q},\frac{\al}{2}-\frac{1}{2q}}_{q}(\Rn\times {\mathbb R}_+)$ and $ \ V|_{x_n =0}\in  \dot  B^{\al-\frac{1}{q},\frac{\al}{2}-\frac{1}{2q}}_{q(0)}(\Rn\times {\mathbb R}_+)$. Together with  \eqref{compatibility1} and \eqref{homo0221}, we conclude that $g-v|_{x_n=0}-V|_{x_n=0}\in \dot B^{\al-\frac{1}{q},\frac{\al}{2}-\frac{1}{2q}}_{q(0)}(\Rn\times {\mathbb R}_+).$
This again implies that
$R'(g-v|_{x_n=0}-V|_{x_n=0})\in \dot B^{\al-\frac{1}{q},\frac{\al}{2}-\frac{1}{2q}}_{q(0)}(\Rn\times {\mathbb R}_+).$
In the end, we conclude that $G'=g'-v'|_{x_n=0}-V|_{x_n=0}-R'(g_n-v_n|_{x_n=0}-V_n|_{x_n=0})\in \dot B^{\al-\frac{1}{q},\frac{\al}{2}-\frac{1}{2q}}_{q(0)}(\Rn\times {\mathbb R}_+)$
with
$$
\|G'\|_{\dot B^{\al-\frac{1}{q},\frac{\al}{2}-\frac{1}{2q}}_{q(0)}(\Rn\times {\mathbb R}_+)}\leq c\Big(\|g\|_{ \dot B^{\al-\frac{1}{q},\frac{\al}{2}-\frac{1}{2q}}_{q}(\Rn\times {\mathbb R}_+)}  +\|v\|_{\dot {B}^{\al,\frac{\al}{2}}_q(\R_+\times {\mathbb R}_+)}
+\|V\|_{\dot B^{\al,\frac{\al}{2}}_{q}(\R_+\times {\mathbb R}_+)} \Big).$$

$\bullet$
By applying Theorem \ref{Rn-1} to the fact that $G=(G',0)\in \dot B_{q(0)}^{\al-\frac{1}{q},\frac{\al}{2}-\frac{1}{2q}}(\Rn\times {\mathbb R}_+)$, we conclude that $w\in \dot {B}_q^{\al,\frac{\al}{2}}(\R_+\times {\mathbb R}_+)$
with
\begin{align}\label{w0221}
\notag\| w\|_{ \dot {B}^{\al,\frac\al 2 }_{q}({\mathbb R}^n_+\times {\mathbb R}_+)}
 &\leq c \|G\|_{  \dot B^{\al-\frac{1}{q},\frac\al 2-\frac{1}{2q}}_{q(0)}({\mathbb
R}^{n-1} \times {\mathbb R}_+)}\\
&\leq c\Big(\|g\|_{ \dot B^{\al-\frac{1}{q},\frac{\al}{2}-\frac{1}{2q}}_{q}(\Rn\times {\mathbb R}_+)}
+\|v\|_{\dot {B}^{\al,\frac{\al}{2}}_q(\R_+\times {\mathbb R}_+)}+\|V\|_{\dot B^{\al,\frac{\al}{2}}_{q}(\R_+\times {\mathbb R}_+)} \Big).
 \end{align}

$\bullet$
From  \eqref{representation-u}, \eqref{naphi}, and \eqref{w0221}, as well as Lemma \ref{proheat1}
and from Lemma \ref{propheat2-1}, the proof of the estimate in Theorem \ref{thm-stokes} for smooth $(h,g)$ with $h|_{x_n=0}=g|_{t=0}$ is completed.

\subsection{Uniqueness}

Let $\tilde{u}\in \dot{B}^{\al,\frac{\al}{2}}_q(\R_+\times {\mathbb R}_+)$ be another solution of the Stokes equations with the same data. Then
\begin{align*}
-\int^\infty_0\int_{\R_+} (u-\tilde{u}) \cdot\big( \Delta \Phi + D_t \Phi \big) dxdt= -0
\end{align*}
for any $\Phi\in C^\infty_c(\overline{\R_+}\times [{\mathbb R}_+))$ with ${\rm div}_x\, \Phi=0$, $\Phi|_{x_n=0}=0$.

Suppose that $\al-\frac{n+2}{q}=-\frac{n+2}{r}$ for some $r$ with $1<r<\infty$, then $\dot{B}^{\al,\frac{\al}{2}}_q(\R_+\times {\mathbb R}_+)\hookrightarrow L^r(\R_+\times {\mathbb R}_+)$.
There is $\Phi \in L^{\frac{r}{r-1}}(\R_+\times {\mathbb R}_+)$ satisfying that $\Delta \Phi + D_t \Phi+\nabla \Pi=|u-\tilde{u}|^{r-2}(u-\tilde{u})$, $\mbox{div}\,\Phi=0$, and $\Phi|_{x_n=0}=0$.
And this implies that $u-\tilde{u}=0$ in $L^r(\R_+\times {\mathbb R}_+)$, and this again implies that $u=\tilde{u}$ almost everywhere in $\R_+\times {\mathbb R}_+$.
Therefore, the solution is unique in the class $\dot{B}^{\al,\frac{\al}{2}}_q(\R_+\times {\mathbb R}_+)$ when $\al-\frac{n+2}{q}=-\frac{n+2}{r}$ for some $r$ with $1<r<\infty$.

\section{Nonlinear problem}

\label{nonlinear}
\setcounter{equation}{0}

In this section, we give the proof of Theorem \ref{thm3}. Accordingly, we construct approximate solutions and then
derive uniform convergence in homogeneous anisotropic Besov spaces $\dot B^{\al,\frac{\al}{2}}_q(\R_+\times {\mathbb R}_+)$. For the uniform estimates,
 bilinear estimates should be preceded.

\subsection{\bf H$\ddot{\rm o}$lder type inequality}

The following H$\ddot{\rm o}$lder type inequality in Besov space is well known
(See Lemma 2.2 in \cite{chae}). 
\begin{prop}\label{bilinear1}
Let $\be >0$, $\frac1{r_i} + \frac1{s_i} = \frac1p$, and $i=1,2$. Then,
\begin{align*}
\| fg\|_{\dot B^{\be,\frac{\be}2}_{pq} (\R \times {\mathbb R} ) }  \leq
        c \big( \| f\|_{ \dot  B^{\be,\frac{\be}2}_{r_1 q} (\R \times {\mathbb R}) } \| g\|_{  L^{s_1} (\R \times {\mathbb R})}   +c \| f\|_{ L^{r_2 }(\R \times {\mathbb R})} \|g\|_{ \dot  B^{\be,\frac{\be}2}_{s_2 q}(\R\times {\mathbb R}  )}  \big).
\end{align*}
\end{prop}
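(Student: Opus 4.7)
The plan is to adapt Bony's paraproduct decomposition to the parabolic (anisotropic) scaling underlying $\dot B^{\beta, \beta/2}_{pq}$. Let $\{\Delta_j\}_{j\in\mathbb{Z}}$ denote parabolic Littlewood--Paley projections on $\R \times \mathbb{R}$, associated with dyadic annuli in the metric $(|\xi|^2 + |\tau|)^{1/2}$, so that
\[
\|f\|_{\dot B^{\beta, \beta/2}_{pq}(\R \times \mathbb{R})} \;\sim\; \Bigl\| 2^{j\beta}\,\|\Delta_j f\|_{L^p(\R\times\mathbb{R})} \Bigr\|_{\ell^q(\mathbb{Z})}.
\]
Writing $S_j = \sum_{k<j} \Delta_k$, I would split
\[
fg \;=\; T_f g + T_g f + R(f,g), \qquad T_f g = \sum_j S_{j-3} f \cdot \Delta_j g,
\]
where $R(f,g)$ is the high--high interaction. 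Each summand of $T_f g$ is parabolically frequency-localized in an annulus of scale $2^j$, so Hölder in $(x,t)$ with exponents $(r_2, s_2)$ combined with the uniform $L^{r_2}$-boundedness of $S_{j-3}$ (convolution with an $L^1$ kernel of bounded mass) gives
\[
\|\Delta_\ell(T_f g)\|_{L^p} \;\lesssim\; \|f\|_{L^{r_2}} \sum_{|j-\ell|\leq C} \|\Delta_j g\|_{L^{s_2}}.
\]
Multiplying by $2^{\ell\beta}$ and taking the $\ell^q$-norm yields the second term on the right of the proposition; the piece $T_g f$ is symmetric, with exponents $(s_1, r_1)$, and produces the first term.

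For the remainder $R(f,g) = \sum_{|j-k|\leq 2} \Delta_j f \cdot \Delta_k g$, the spectral support of each summand lies in a parabolic ball of radius $\sim 2^j$, so only indices $j \geq \ell - C$ contribute to $\Delta_\ell R(f,g)$. I would place the $2^{j\beta}$-weight on $g$ and bound
\[
2^{\ell\beta}\|\Delta_\ell R(f,g)\|_{L^p} \;\lesssim\; \sum_{j \geq \ell - C} 2^{(\ell-j)\beta}\,\|f\|_{L^{r_2}}\, \bigl( 2^{j\beta} \|\Delta_j g\|_{L^{s_2}} \bigr),
\]
which is summable in $\ell^q(\mathbb{Z})$ by Young's convolution inequality precisely because $\beta > 0$ makes the kernel $2^{(\ell-j)\beta}\mathbf{1}_{j\geq \ell-C}$ lie in $\ell^1$. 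The same argument with $f$ and $g$ swapped yields the alternative placement.

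\emph{Main obstacle.} The principal technical point is invoking the correct form of the parabolic Bernstein inequalities and the uniform $L^r$-boundedness of the cutoffs $S_j, \Delta_j$ under the scaling $(x,t) \mapsto (\lambda x, \lambda^2 t)$; these are standard in the anisotropic Littlewood--Paley theory (see \cite{amman-anisotropic, Triebel2}) but need to be transferred from the isotropic setting. The one step that genuinely uses the hypothesis $\beta > 0$ is the geometric-series estimate of the remainder; at $\beta = 0$ the high--high piece is no longer absolutely summable and the inequality breaks down.
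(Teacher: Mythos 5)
The paper gives no proof of this proposition at all --- it simply cites Lemma 2.2 of \cite{chae}, whose proof is exactly the Bony paraproduct argument you carry out, here transplanted to the parabolic Littlewood--Paley decomposition (and the equivalence of that characterization with the paper's intersection-type definition of $\dot B^{\be,\be/2}_{pq}$ is the standard fact you correctly flag and source to \cite{amman-anisotropic,Triebel2}). Your argument is correct and is essentially the argument behind the citation: the two paraproduct pieces follow from H\"older together with the uniform $L^r$-boundedness of $S_j$ and $\Delta_j$, and the high--high remainder is $\ell^1$-summable against the weight $2^{(\ell-j)\be}$ precisely because $\be>0$, which is exactly where the hypothesis is used.
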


Let $f$ and $g$ be functions defined in $\R_+ \times {\mathbb R}_+$.
Further, let $\tilde f$ and $\tilde g$ be the reflective extensions over $ \R \times {\mathbb R}$ with respect to space and time of $f$ and $g$, respectively. Then, by applying Proposition \ref{bilinear1} to $\tilde f$ and $\tilde g$ for $ 0 < \be < 1$, we obtain
\begin{align}\label{exten-sepa}
\| fg\|_{\dot B^{\be,\frac{\be}2}_{pq} (\R_+ \times {\mathbb R}_+ ) }  \leq
        c \big( \| f\|_{ \dot  B^{\be,\frac{\be}2}_{r_1 q} (\R_+ \times {\mathbb R}_+) } \| g\|_{  L^{s_1} (\R_+ \times {\mathbb R}_+)}   +c \| f\|_{ L^{r_2 }(\R_+ \times {\mathbb R}_+)} \|g\|_{ \dot  B^{\be,\frac{\be}2}_{s_2 q}(\R_+ \times{\mathbb R}_+ )}  \big).
\end{align}

\subsection{\bf Proof of Theorem \ref{thm3}}

In this section, we show the construction of a solution of Navier--Stokes equations \eqref{maineq2}.

\subsubsection{Approximate solutions}

Let $(u^1,p^1)$ be the solution of the equations
\begin{align}
\label{maineq5-1}
\begin{array}{l}\vspace{2mm}
u^1_t - \De u^1 + \na p^1 =0, \qquad {\rm div} \, u^1 =0, \mbox{ in }
 \R_+\times {\mathbb R}_+,\\
\hspace{30mm}u^1|_{t=0}= h, \qquad  u^1|_{x_n =0} = g.
\end{array}
\end{align}
Let $m\geq 1$.
After obtaining $(u^1,p^1),\cdots, (u^m,p^m)$, construct $(u^{m+1}, p^{m+1})$, which satisfies the equations
\begin{align}
\label{maineq5}
\begin{array}{l}\vspace{2mm}
u^{m+1}_t - \De u^{m+1} + \na p^{m+1} =f^m, \qquad {\rm div} \, u^{m+1} =0, \mbox{ in }
 \R_+\times {\mathbb R}_+,\\
\hspace{30mm}u^{m+1}|_{t=0}= h, \qquad  u^{m+1}|_{x_n =0} = g,
\end{array}
\end{align}
where $f^m = {\mathcal F}^m=-\mbox{div}(u^m\otimes u^m)$.

\subsubsection{Uniform boundedness}
Let $0<\al<2$ and $q=\frac{n+2}{\al+1}$. Moreover, let $h$ and $g$ satisfy the hypothesis in Theorem \ref{thm3}. Hence, $h, g,$ and ${\mathcal F}^m$ satisfy the hypothesis in Theorem \ref{thm-stokes}.
Set
\begin{align*}
M_0&=\|h\|_{  \dot B^{\al-\frac{2}{q}}_{q}({\mathbb
R}^{n}_+)}+
\|g\|_{ \dot  B^{\al-\frac{1}{q},\frac\al 2-\frac{1}{2q}}_{q}({\mathbb
R}^{n-1} \times {\mathbb R}_+)}\\
&\quad +\|g_n\|_{ \dot  {B}^{\frac12 \al
}_q ({\mathbb R}_+; \dot B^{-\frac1q}_q ({\mathbb
R}^{n-1}))}+\|g_n\|_{ L^q({\mathbb R}_+;\dot{B}^{\al-\frac{1}{q}}_q(\Rn))}.
\end{align*}

Observe that $0<-\al+\frac{n+2}{q}<n+2$, so the solution of \eqref{maineq5-1}  exists uniquely in $\dot{B}^{\al,\frac{\al}{2}}_q(\R_+\times {\mathbb R}_+)$.
 By applying Theorem \ref{thm-stokes} to the solution of  \eqref{maineq5-1}, we have
\begin{align}
\label{uc1}
 \| u^{1}\|_{\dot B^{\al,\frac\al 2 }_{q}({\mathbb R}^n_+\times {\mathbb R}_+)}
\leq c_1 M_0.
\end{align}

Take $1 < p<
\min (q,\frac{n+2}{2})$ and let $\be=-2+\frac{n+2}{2}$.
Then
\begin{align*}
1<p\leq q,\
1-\al+\be -\frac{n+2}p +\frac{n+2}q=0,\\
  0 <  \be < \al < \be +1 < 2,\quad
 - \al+\frac{n+1}{p}-\frac{n+2}q >0.
 \end{align*}
Hence, $(\be, p)$ satisfies the assumption of Theorem \ref{thm-stokes}.

From Besv  embedding theorem(see (3) of Proposition \ref{prop2}) it holds that
\begin{align*}
\dot B^{\al,\frac{\al}{2}}_q(\R_+\times {\mathbb R}_+)\hookrightarrow L^{n+2}(\R_+\times {\mathbb R}_+),\\
\dot B^{\al,\frac{\al}{2}}_q(\R_+\times {\mathbb R}_+)\hookrightarrow \dot B^{\be,\frac{\be}{2}}_{ \frac{p(n+2)}{n+2-p} p}(\R_+\times {\mathbb R}_+).
\end{align*}
In Proposition \ref{bilinear1}, 
by considering $ s_1 = r_2 = n+2$ and $r_1 = s_2 = \frac{p(n+2)}{n+2 -p}$ and based on (3) of Proposition \ref{prop2}, we have
\begin{align}\label{1114-1}
\notag\|u^m\otimes u^m\|_{\dot B^{\be,\frac{\be}2}_{p}(\R_+ \times {\mathbb R}_+ )}
&\leq  c \big( \| u^m\|_{ \dot  B^{\be,\frac{\be}2}_{r_1 p } (\R_+ \times  {\mathbb R}_+) } \| u^m\|_{  L^{s_1} (\R_+ \times  {\mathbb R}_+)}\\
\notag& \qquad    + \| u^m\|_{ L^{r_2}(\R_+ \times  {\mathbb R}_+)} \|u^m\|_{ \dot  B^{\be,\frac{\be}2}_{s_2 p}(\R_+  \times  {\mathbb R}_+  )} \big)\\
&\leq c\|u^m\|^2_{\dot B^{\al,\frac{\al}2}_{q}(\R_+  \times  {\mathbb R}_+)}.
\end{align}

As $   \| u^m\otimes  u^m\|_{ L^p({\mathbb R}_+;\dot B^{\be}_p(\R_+))} \leq c  \| u^m\otimes  u^m\|_{ \dot B^{\be,\frac\be 2}_{p}({\mathbb
R}^{n}_+ \times {\mathbb R}_+)} $, according to Theorem \ref{thm-stokes},   there is $u^{m+1}\in \dot {B}^{\al,\frac{\al}{2}}_q(\R_+\times {\mathbb R}_+)$ satisfying that
\begin{align}
\label{6.4}
\notag \| u^{m+1}\|_{\dot B^{\al,\frac\al 2 }_{q}({\mathbb R}^n_+\times {\mathbb R}_+)}
& \leq   c \Big(  M_0
   + \| u^m\otimes  u^m\|_{ \dot B^{\be,\frac\be 2}_{p}({\mathbb
R}^{n}_+ \times {\mathbb R}_+)}\Big) \\
%
& \leq   c_1 \Big( M_0
   + \|u^m\|^2_{\dot B^{\al,\frac{\al}2}_{q}(\R_+  \times  {\mathbb R}_+)}\Big).
\end{align}
Observe that $0<-\al+\frac{n+2}{q}<n+2$, so the solution of \eqref{maineq5}  exists uniquely in $\dot{B}^{\al,\frac{\al}{2}}_q(\R_+\times {\mathbb R}_+)$.


Under the condition that $\|u^m\|_{\dot B^{\al,\frac{\al}{2}}_q(\R_+\times {\mathbb R}_+)}\leq M$ and from  \eqref{6.4}, we have
\[
\|u^{m+1}\|_{\dot B^{\al,\frac{\al}{2}}_q(\R_+\times {\mathbb R}_+)}\leq c_1 \big( M_0+
  M^2 \big).
\]
Choose $  c_1 M \leq\frac12 $ and $M_0$ with  $2c_1M_0\leq M$. Then,  based on the mathematical induction argument, we can conclude that
\[
\|u^{m}\|_{\dot B^{\al,\frac{\al}{2}}_q(\R_+\times {\mathbb R}_+)}\leq M\quad \mbox{ for all }\quad m=1,2\cdots.
\]

\subsubsection{Uniform convergence}

Let $U^m=u^{m+1}-u^m$ and $P^m=p^{m+1}-p^m$.
Then, $(U^m, P^m)$ satisfies the equations
\begin{align}
\label{diff}
&\begin{array}{rl}\vspace{2mm}
U^m_t - \De U^m + \na P^m &=-\mbox{div }\big(u^m\otimes U^{m-1}+U^{m-1}\otimes u^{m-1} \big)\mbox{ in }
 \R_+\times {\mathbb R}_+,\\\
 \mbox{div} \, U^{m}&=0\mbox{ in }
 \R_+\times {\mathbb R}_+,\\
  & U^{m}|_{t=0}= 0, \quad  U^{m}|_{x_n =0} =0.
\end{array}
\end{align}

Since  $0<-\al+\frac{n+2}{q}<n+2$, so the solution of \eqref{diff}  exists uniquely in $\dot{B}^{\al,\frac{\al}{2}}_q(\R_+\times {\mathbb R}_+)$.
From Theorem \ref{thm-stokes}, we have
\begin{align*}
&\|U^m\|_{\dot{B}_q^{\al,\frac{\al}2 }({\mathbb R}^n_+\times {\mathbb R}_+)}
\\
&\leq c_2 (\|u^m\|_{\dot {B}_q^{\alpha,\frac{\al}2}(\R_+\times {\mathbb R}_+)}+\|u^{m-1}\|_{\dot {B}^{\alpha,\frac{\al}2}_q(\R_+\times {\mathbb R}_+)})\|U^{m-1}\|_{\dot {B}_q^{\alpha,\frac{\al}2}(\R_+\times {\mathbb R}_+)}\\
&\leq 2c_2M\|U^{m-1}\|_{\dot {B}_q^{\alpha,\frac{\al}2}(\R_+\times {\mathbb R}_+)}.
\end{align*}

Choose $M$ so that $c_2M<\frac{1}{4}$,
then, the above-mentioned estimate results in
\begin{equation}
\label{m2}
\|U^m\|_{\dot {B}_q^{\al,\frac{\al}2 }({\mathbb R}^n_+\times {\mathbb R}_+)}\leq \frac{1}{2}\|U^{m-1}\|_{\dot {B}_q^{\alpha,\frac{\al}2}(\R_+\times {\mathbb R}_+)}.
\end{equation}

 \eqref{m2} implies that the infinite series $\sum_{k=1}^\infty U^k$ converges in $\dot {B}_q^{\alpha,\frac{\al}2}(\R_+\times {\mathbb R}_+)$.
Hence, $u^n=u^1+\sum_{k=1}^nU^{k}, m=2,3,\cdots$  converges to $u^1+\sum_{k=1}^\infty U^{k}$ in $ \dot {B}_q^{\alpha,\frac{\al}2}(\R_+\times {\mathbb R}_+)$.
Now, set $u:=u^1+\sum_{k=1}^\infty U^{k}.$

\subsection{Existence}

Let $u$ be the vector field constructed in the previous section.
In this section, we show that $u$ satisfies a weak formulation of the Navier--Stokes equations. Let $\Phi\in C^\infty_{0}(\overline{{\mathbb R}_+}\times [{\mathbb R}_+))$ with $\mbox{div }\Phi=0$ and $\Phi|_{x_n=0}=0$.
Note that
\begin{align*}
\label{w1}
-\int^\infty_0\int_{\R_+}u^{m+1}\cdot  \big( \Delta \Phi + D_t \Phi\big) dxdt&=\int^\infty_0\int_{\R_+}(u^m\otimes u^m): \nabla\Phi dxdt -\int_{\R_+} h(x) \cdot \Phi(x,0) dx\\
&\quad
-\int^\infty_0\int_{\Rn} g(x',t) \cdot \frac{\partial \Phi}{\partial x_n}(x',t)dx' dt.
\end{align*}
As $\al = -1 +\frac{n+2}q$, by using (3) of Proposition \ref{prop2}, we have $\dot B^\al_q(\R_+ \times {\mathbb R}_+ \subset L^{n+2} (\R_+ \times {\mathbb R}_+)$. Now, send $m$ to infinity, then, as $u^m\rightarrow u$ in $\dot {B}^{\alpha,\frac{\alpha}{2}}_q(\R_+\times {\mathbb R}_+)$, we have
\begin{align*}
-\int^\infty_0\int_{\R_+}u\cdot \big( \Delta \Phi  +D_t \Phi \big) dxdt&=\int^\infty_0\int_{\R_+} (u\otimes u): \nabla\Phi dxdt-\int_{\R_+} h(x) \cdot \Phi(x,0) dx\\
&\quad
-\int^\infty_0\int_{\Rn} g(x',t) \cdot \frac{\partial \Phi}{\partial x_n}(x',t)dx' dt.
\end{align*}
Therefore, we conclude that $u$ is a weak solution of  \eqref{maineq2}.

\subsection{Uniqueness}

Let $ v\in \dot B^{\al,\frac{\al}{2}}_q(\R_+\times {\mathbb R}_+)$ be another solution of Naiver--Stokes equations \eqref{maineq2} with pressure $q$. Then,
 $u-v$ satisfies the equations
\begin{align*}
(u-v)_t - \De (u-v) + \na (p-q)& =-\mbox{div}(u\otimes (u-v)+(u-v)\otimes v)\mbox{ in }
 \R_+\times {\mathbb R}_+, \\
 {\rm div} \, (u-v)& =0,
 \mbox{ in }\R_+\times {\mathbb R}_+,\\
 (u-v)|_{t=0}= 0, &\quad (u-v)|_{x_n =0} =0.
\end{align*}

Note that $u, \, u_1 \in L^{n+2}(\R_+ \times {\mathbb R}_+)$.
Applying  the estimate of Theorem \ref{thm-stokes} in \cite{CJ2} to the above Stokes equations,  we have
\begin{align*}
\| u-u_1\|_{L^{n+2}({\mathbb R}^n_+ \times (0, \tau))}
 \leq c \|u\otimes (u-u_1)+(u-u_1)\otimes u_1 \|_{L^{\frac{n+2}{2}}({\mathbb R}^n_+ \times (0, \tau))}\\
 \leq c_3 ( \|u\|_{L^{n+2}({\mathbb R}^n_+ \times (0, \tau))}+\|u_1\|_{L^{n+2}({\mathbb R}^n_+ \times (0, \tau))})\| u-u_1\|_{L^{n+2}({\mathbb R}^n_+ \times (0, \tau))}, \ \tau <\infty.
 \end{align*}
 Since  $u,u_1\in L^{n+2}( \R_+ \times {\mathbb R}_+)$, there is $ 0 <\de$ such that if $\tau_1 < \tau_2$ and $\tau_2 -\tau_1 \leq \de$, then
 \[ \|u\|_{L^{n+2} ({\mathbb R}^n_+ \times (\tau_1, \tau_2))}+\|u_1\|_{{L^{n+2} ({\mathbb R}^n_+ \times (\tau_1, \tau_2))}}<\frac{1}{c_3+1}.\]
 Hence, we have
\[
 \| u-u_1\|_{L^{n+2}({\mathbb R}^n_+ \times (0, \de)))}
 <\| u-u_1\|_{L^{n+2} ({\mathbb R}^n_+ \times (0,\de))}.\]
 This implies that
 $\| u-u_1\|_{L^{n+2} ({\mathbb R}^n_+ \times (0,\de))}=0$, that is, $u\equiv u_1$ in $\R_+\times (0,\de]$.
Observe that  $u-u_1$ satisfies the Stokes equations
 \begin{align*}
(u-u_1)_t - \De (u-u_1) + \na (p-p_1)& =-\mbox{div}(u\otimes (u-u_1)+(u-u_1)\otimes u_1)\mbox{ in }
 \R_+\times (\de,\infty), \\
 \mbox{div } (u-u_1)& =0
 \mbox{ in }\R_+\times (\de,\infty),\\
 (u-u_1)|_{t=\de}= 0, &\quad (u-u_1)|_{x_n =0} =0.
\end{align*}
Again, applying  the estimate of Theorem \ref{thm-stokes}  in \cite{CJ2} to the above Stokes equations,  we have 
\begin{align*}
\| u-u_1\|_{L^{n+2}(\R_+ \times [\de,2\de])}
& \leq c_3 ( \|u\|_{L^{n+2}(\R_+ \times [\de,2\de])}+\|u_1\|_{L^{n+2}(\R_+ \times [\de,2\de])})\| u-u_1\|_{L^{n+2}(\R_+ \times [\de,2\de])}\\
 & < \| u-u_1\|_{L^{n+2}(\R_+ \times [\de,2\de])}.
 \end{align*}
 This implies that
 $\| u-u_1\|_{L^{n+2}({\mathbb R}^n_+ \times (\de, 2\de))}=0$, that is, $u\equiv u_1$ in $\R_+\times [\de,2\de]$.
After iterating this procedure  finitely many times, we obtain  the conclusion that $u=u_1$ in $\R_+\times {\mathbb R}_+$.
Therefore, we conclude the proof of  the global in time uniqueness.

\appendix
\setcounter{equation}{0}

\section{Proof of Lemma \ref{lemma0115}}
\label{appendixa}

In \cite{lady?}, it was determined that
\[\]
\begin{align}
\label{T_11}
\|T_1f\|_{\dot{W}^{2,1}_q(\R \times {\mathbb R})}&\leq c\|f\|_{L^q(\R\times {\mathbb R})}.
\end{align}

Note that $T_1^*$ is the adjoint operator of $T_1$. Hence,  \eqref{T_11} implies that
\begin{align}
\label{T^*2}
\| T_1^*f\|_{L^p(\R \times {\mathbb R})} \leq  c\|f\|_{\dot W^{-2,-1}_p(\R\times {\mathbb R})}.
\end{align}

Further, note that $D^2_yT_1^*f$ and $\ D_s T_1^*f$ comprise $L^p$ Fourier multipliers as the Fourier transform of $T_1^*f$ is $
\widehat{T^*_1 f}(\xi, \eta)=\frac{1}{|\xi|^2-i \eta}
\hat{f}(\xi,\eta)$.
Hence, we have
\begin{align}
\label{T^*1}
\|T_1^*f\|_{\dot W^{2,1}_{p}(\R \times {\mathbb R})}    \leq c \|f\|_{L^p(\R \times {\mathbb R})}, \quad   1<p<\infty.
\end{align}
As $T^*_1$ is the adjoint operator of $T_1$,  \eqref{T^*1} implies that
\begin{align}
\label{T2}
\| T_1f\|_{L^p(\R \times {\mathbb R})} \leq  c\|f\|_{\dot W^{-2,-1}_p(\R\times {\mathbb R})}.
\end{align}
By applying the real interpolation theory to \eqref{T_11} and \eqref{T2}, and   \eqref{T^*2} and \eqref{T^*1}, we obtain estimates of $T_1f$  and $T_1^* f$ in $\dot B^{\al,\frac{\al}{2}}_q(\R\times {\mathbb R})$ for $0<\al<2$.



\section{Proof of Lemma \ref{lem-T}}
\label{appendixb}
First, let us derive the estimate of $T_2g$.
From \cite{lady?}, we have the following estimate
\begin{equation}
\label{T_21}\|T_2g\|_{\dot{W}^{2,1}_q(\R_+ \times {\mathbb R})}\leq c\|g\|_{\dot{B}_q^{1-\frac{1}{q},\frac12-\frac{1}{2q}}(\Rn \times {\mathbb R})}.
\end{equation}
Note that the identity
\begin{align}\label{1027-1}
\int_{-\infty}^\infty \int_{\R_+} T_2g(x,t) \phi(x,t) dxdt = <g, T_1^* \tilde \phi|_{y_n=0}>
\end{align}
holds for $\phi\in C^\infty_0(\R_+\times {\mathbb R})$,
  where $ T^*_1 \tilde \phi$ is defined in Section \ref{preliminary} with zero extension $\tilde \phi$ of $\phi$ and $<\cdot,\cdot>$ is the duality pairing between $\dot{B}^{-1-\frac{1}{q},-\frac{1}{2}-\frac{1}{2q}}_q(\Rn\times {\mathbb R})$ and $\dot{B}^{1+\frac{1}{q},\frac{1}{2}+\frac{1}{2q}}_{q'}(\Rn\times {\mathbb R}).$ 
%
Based on (4) of Proposition \ref{prop2} and  \eqref{T^*1}, we have
\begin{align}\label{1027-2}
\|T_1^*\phi|_{y_n=0}\|_{\dot B^{1+\frac1q, \frac{1}{2}+\frac1{2q}  }_{q'}(\Rn \times {\mathbb R})}
\leq c  \|T_1^*\phi\|_{\dot W^{2,1}_{q'}(\R \times {\mathbb R})}
\leq c \|\phi\|_{L^{q'}(\R_+ \times {\mathbb R}) }.
\end{align}
By applying the estimates in  \eqref{1027-1} to  \eqref{1027-2}, we have
\begin{equation}
\label{mathcalT1}
\| T_2g \|_{L^q(\R_+ \times {\mathbb R})}\leq c\|g\|_{\dot B^{-1-\frac1q, -\frac{1}{2}-\frac1{2q}  }_q(\Rn \times {\mathbb R}) }.
\end{equation}
Further, by applying the real interpolation theory to  \eqref{T_21} and \eqref{mathcalT1},
 we obtain the estimate of $T_2g$ in $\dot B^{\al,\frac{\al}{2}}_q(\R_+\times {\mathbb R})$ for $0<\al<2$.

Analogously, we can derive the estimate of $T_2^*g$ 
by observing that the identity
\begin{align}\label{1027-20}
\int_{-\infty}^\infty \int_{\R_+} T_2^*g(y,s) \phi(y,s) dyds = <g,  T_1 \tilde \phi|_{x_n=0}>
\end{align}
holds for $\phi\in C^\infty_0(\R_+ \times {\mathbb R})$,
where $ T_1 \tilde \phi$ is defined in Section \ref{preliminary} with zero extension $\tilde \phi$ of $\phi$, and $<\cdot,\cdot>$ is the duality pairing between $\dot{B}^{-1-\frac{1}{q},-\frac{1}{2}-\frac{1}{2q}}_q(\Rn\times {\mathbb R})$ and $\dot{B}^{1+\frac{1}{q},\frac{1}{2}+\frac{1}{2q}}_{q'}(\Rn\times {\mathbb R}).$
By using the same procedure as that used for the estimate of $T_2g$, we can obtain the estimate of $T_2^*g$ as
\begin{equation}
\label{mathcalT1-1}
\| T_2^*g \|_{L^q(\R_+ \times {\mathbb R})}\leq c\|g\|_{\dot B^{-1-\frac1q, -\frac{1}{2}-\frac1{2q}  }_q(\Rn \times {\mathbb R}) }
\end{equation}
(As the procedure is the same as that for $T_2g$, we omitted the details).
As $D_sT_2^*g=T_2^*(D_sg)$, we have
\begin{equation}
\label{mathcalT1-2}
\|D_sT_2^*g \|_{L^q(\R_+ \times {\mathbb R})}\leq c\|D_sg\|_{\dot B^{-1-\frac1q,-\frac12 -\frac1{2q}  }_q(\Rn \times {\mathbb R}) }\leq c\|g\|_{\dot B^{1-\frac1q, \frac{1}{2}-\frac1{2q}  }_q(\Rn \times {\mathbb R}) }.
\end{equation}
In addition, as $\Delta_y T_2^*g=-D_sT_2^*g$ and $\frac{\partial}{\partial y_n}T_2^*g|_{y_n=0}=g$,
based on the well-known elliptic theory \cite{adn,adn1}, we have
\[
\|T_2^*g(s) \|_{ \dot W^{2}_q(\R_+ )}\leq c\| D_s T_2^*g(s) \|_{L^q(\R_+ )}+c\|g(s)\|_{\dot B^{1-\frac{1}{q}}_q(\Rn)}.
\]
This implies that
\begin{equation}
\label{mathcalT1-3}
\|T_2^*g \|_{L^q({\mathbb R};\dot W^{2}_q(\R_+ ))}\leq  c\|g\|_{ \dot B^{1-\frac1q, \frac{1}{2}-\frac1{2q}  }_q(\Rn \times {\mathbb R}) }.
\end{equation}
By combining  \eqref{mathcalT1-2} and \eqref{mathcalT1-3}, we have
\begin{equation}
\label{mathcalT1-4}
\| T_2^*g \|_{\dot W^{2,1}_q(\R_+ \times {\mathbb R})}\leq c\|g\|_{\dot B^{1-\frac1q, \frac{1}{2}-\frac1{2q}  }_q(\Rn \times {\mathbb R}) }.
\end{equation}
By applying the real interpolation theory to \eqref{mathcalT1-1} and \eqref{mathcalT1-4}, we obtain the estimate of $T_2^*g$ in $\dot B^{\al,\frac{\al}{2}}_q(\R_+\times {\mathbb R})$ for $0<\al<2$. Thus, we complete the proof of Lemma \ref{lem-T}.

%
%

\section{Proof of Lemma \ref{proheat1} }
\label{appendixc}
From \cite{lady?},
the following estimate is known:
\begin{equation}
\label{T_01}
\|\Gamma_t*h\|_{\dot{W}^{2,1}_q(\R \times  {\mathbb R}_+)}\leq c\|h\|_{\dot{B}_q^{2-\frac{2}{q}}(\R)}.
\end{equation}

Let us consider the case where $h\in \dot B^{-\frac{2}{q}}_q(\R)$.
%
Note that the identity
$
\int^\infty_{0}\int_{\R}\Gamma_t*h(x,t)\phi(x,t)dx dt=<h,T_1^*\phi|_{s=0}>$ holds for  $\phi\in C_0^\infty(\R\times {\mathbb R})$,
where $T_1^*\phi(y,s)=\int^\infty_{s}\int_{\R}\Gamma(x-y,t-s)\phi(x,t)dxdt,$, and $<\cdot,\cdot>$ is the duality pairing between $\dot{B}^{-\frac{2}{q}}_q(\R)$ and $\dot{B}^{\frac{2}{q}}_{q'}(\R).$
From  \eqref{T^*1}, we have
\[
\|T_1^*\phi\|_{\dot W^{2,1}_{q'}(\R \times {\mathbb R})}
\leq c \|\phi\|_{L^{q'}(\R \times {\mathbb R}) }.\]
%
%
%
By using (5) of Proposition \ref{prop2}, this  implies that
\[
\|T_1^*\phi|_{s=0}\|_{\dot{B}_{q'}^{2-\frac{2}{q'}}(\R)}\leq c\|T^*_1\phi\|_{\dot W^{2,1}_{q'}(\R \times {\mathbb R})}
\leq c \|\phi\|_{L^{q'}(\R \times {\mathbb R}_+) }.\]
(See \cite{Tr} and \cite{Triebel2}.)
Hence, we have \[
<h,T_1^*\phi|_{s=0}>\leq c\|h\|_{\dot{B}_q^{-\frac{2}{q}}(\R)}\|T_1^*\phi\|_{\dot{B}^{2-\frac{2}{q'}}(\R)}\leq c\|h\|_{\dot{B}_q^{-\frac{2}{q}}(\R)}\|\phi\|_{L^{q'}(\R \times {\mathbb R}) } .\]
Again, this leads to the following conclusion 
%

\begin{equation}
\label{it1}
\|\Gamma_t*h\|_{L^q(\R\times  {\mathbb R}_+)}\leq c\|h\|_{\dot{B}^{-\frac{2}{q}}_q(\R)}.
\end{equation}

By interpolating  \eqref{T_01} and \eqref{it1}, we have
\begin{align}
\label{it3}
 \|\Gamma_t*h\|_{B^{\al,\frac{\al}{2}}_q(\R\times {\mathbb R}_+)}\leq c \|h\|_{\dot B^{\al-\frac{2}{q}}_q(\R)}, \qquad 0< \al< 2.
\end{align}
Now, we will derive the estimate of $\Gamma_t*h|_{x_n=0}$.

1) Let $\al>\frac{1}{q}$. Then by (5) of Proposition \ref{prop2},
$\Gamma_t*h\in \dot B^{\al,\frac{\al}{2}}_q(\R\times {\mathbb R}_+)$ implies that
$\Gamma_t*h|_{x_n=0}\in \dot B^{\al-\frac{1}{q},\frac{\al}{2}-\frac{1}{2q}}_q(\Rn\times {\mathbb R}_+)$ with
\begin{align*}
\|\Gamma_t*h|_{x_n=0}\|_{\dot B^{\al-\frac{1}{q},\frac{\al}{2}-\frac{1}{2q}}_q(\Rn\times {\mathbb R}_+)}&\leq c\|\Gamma_t*h\|_{\dot B^{\al,\frac{\al}{2}}_q(\R\times {\mathbb R}_+)} \leq c  \| h\|_{\dot B_q^{\al -\frac2 q}(\R)}.
\end{align*}

2) Let  $0 <\al <  \frac{1}{q}.$  In this case, usual trace theorem  does not hold any more.

For $h\in \dot{B}^{\al-\frac{2}{q}}_q(\R)$ the following
 identity holds:
\begin{equation}
\label{holder1}
<\Gamma_t*h\Big|_{x_n=0},\phi>=<h, T_2^*\phi|_{s=0}>,\end{equation} holds for
any  $\phi \in C^\infty_0(\Rn\times {\mathbb R})$,
where $T_2^*\phi(y,s)=\int^\infty_s\int_{\Rn}\Gamma(x'-y',y_n,t-s)\phi(x',t)dx'dt$ and $<\cdot,\cdot>$ is the duality pairing between $\dot{B}^{\al-\frac{2}{q}}_q(\R)$ and $\dot{B}^{-\al+\frac{2}{q}}_{q'}(\R)$ .
From the result of Lemma \ref{lem-T},
$
T_2^*\phi\in \dot{B}^{-\al+2,-\frac{\al}{2}+1}_{q'}(\R_+\times {\mathbb R}) $ 
with
\[
\|T_2^*\phi\|_{ \dot{B}^{-\al+2,-\frac{\al}{2}+1}_{q'}(\R_+\times {\mathbb R})}
\leq c\|\phi\|_{\dot{B}^{-\al+\frac{1}{q},-\frac{\al}{2}+\frac{1}{2q}}_{q'}(\Rn\times {\mathbb R})}
.\]
By (4) of Proposition \ref{prop2}, this implies that
$T_2^*\phi\Big|_{s=0}\in \dot{B}^{-\al+\frac{2}{q}}_{q'}(\R_+ )$ with
\[
\|T_2^*\phi\Big|_{s=0}\|_{ \dot{B}^{-\al+\frac{2}{q}}_{q'}(\R_+ )}
   \leq c\|\phi\|_{\dot{B}^{-\al+\frac{1}{q},-\frac{\al}{2}+\frac{1}{2q}}_{q'}(\Rn\times {\mathbb R})}.\]
Hence
\[
|<h, T_2^*\phi\Big|_{s=0}>|\leq c\|h\|_{\dot{B}^{\al-\frac{2}{q}}_q(\R)}\|\phi\|_{\dot{B}^{-\al+\frac{1}{q},-\frac{\al}{2}+\frac{1}{2q}}_{q'}(\Rn\times {\mathbb R})}\]
Applying the above estimate to \eqref{holder1},
$\Gamma_t*h|_{x_n=0}\in  \dot{B}^{\al-\frac{1}{q},\frac{\al}{2}-\frac{1}{2q}}_{q}(\Rn\times {\mathbb R})$ with
\begin{align}
\label{y1}
\|\Gamma_t*h|_{x_n=0}\|_{\dot{B}^{\al-\frac{1}{q},\frac{\al}{2}-\frac{1}{2q}}_{q}(\Rn\times {\mathbb R})}\leq c\|h\|_{\dot{B}^{\al-\frac{2}{q}}_q(\R)}.\end{align}

3) Finally let us consider the case $\al=\frac1q$. Using the real interpolation, we get the case of $\al =\frac1q$.

\section{Proof of Lemma \ref{propheat2-1} }
\label{appendixd}
\setcounter{equation}{0}

$\bullet$
Let $\tilde{f}\in L^p({\mathbb R};{B}^{\be}_p(\R))$ be the zero extension of $f$ to $\R\times {\mathbb R}$.
Note that $D_x \Gamma * \tilde f = \Gamma * D_x \tilde f$.
From \eqref{T_11}, we have
\begin{align*}
\|D_x\Gamma * \tilde f\|_{\dot W^{2,1}_p(\R\times {\mathbb R})} \leq c\| D_x \tilde f\|_{L^p( {\mathbb R}; L^p(\R))} \leq c\| f\|_{L^p( {\mathbb R}_+;\dot{W}^1_p(\R))}
\end{align*}
and
\begin{align*}
\|D_x\Gamma *\tilde f\|_{\dot W^{1,\frac{1}{2}}_p(\R\times {\mathbb R})}\leq c \|\Gamma *\tilde f\|_{\dot W^{2,1}_p(\R\times {\mathbb R})} \leq c\| f\|_{L^p( {\mathbb R}_+;L^p(\R))}.
\end{align*}
By interpolating these two estimates, we can obtain
\begin{equation}
\label{itra1}
\|D_x\Ga*{f}\|_{\dot B^{\be+1,\frac{\be +1}{2}}_p(\R\times {\mathbb R})}\leq c\|{ f}\|_{L^p({\mathbb R}_+;\dot{B}^{\be}_p(\R))}, \,\,  0<\be<1.
\end{equation}
Further, by applying Besov imbedding (see (3) of Proposition \ref{prop2}), for $1-\al+\be-(n+2)(\frac{1}{p}-\frac{1}{q})=0$
, we have
\begin{equation}
\label{itr1111}
\|D_x\Ga*{f}\|_{\dot{ B}_{q}^{\al,\frac{\al}{2}}(\R\times {\mathbb R})}\leq c \|{ f}\|_{L^p({\mathbb R}_+;\dot{B}^\be_p(\R))}.
\end{equation}

Note that $D_x\Ga*{f}(x,t)=0$ for $t\leq 0$. Hence, $D_x\Ga*{f}\in \dot { B}_{q(0)}^{\al,\frac{\al}{2}}(\R\times {\mathbb R}_+).$

$\bullet$
Now, we derive the estimate of $D_x\Ga*{f}\Big|_{x_n=0}$.

1)
Let $\al>\frac{1}{q}$. Then, according to the usual trace theorem,
$D_x\Ga*{f}\in B^{\al,\frac{\al}{2}}_{q(0)}(\R\times {\mathbb R}_+)$ implies that
$D_x\Ga*{f}|_{x_n=0}\in B^{\al-\frac{1}{q},\frac{\al}{2}-\frac{1}{2q}}_{q(0)}(\Rn\times {\mathbb R}_+)$ with
\begin{align}
\label{y3}
\notag \|D_x\Ga*{f}|_{x_n=0}\|_{B^{\al-\frac{1}{q},\frac{\al}{2}-\frac{1}{2q}}_{q(0)}(\Rn\times {\mathbb R}_+)} &\leq c\|D_x\Ga*{f}\|_{B^{\al,\frac{\al}{2}}_{q}(\R\times {\mathbb R}_+)}\\
& \leq  c \|{ f}\|_{L^p({\mathbb R}_+ ;\dot{B}^\be_p(\R)))}.
\end{align}

2) Let $0<\al\leq \frac{1}{q}$. In this case, the usual trace theorem does not hold true.

If $\al+\frac{n+1}{p}-\frac{n+2}{q}>0$, we can choose $r$ with $p<r<q,$ $\al+\frac{n+1}{r}-\frac{n+2}{q}>0$. Set $\gamma=\al+\frac{n+2}{r}-\frac{n+2}{q}$, then
$\al-\frac{1}{q}-\frac{n+1}{q}=\gamma-\frac{1}{r}-\frac{n+1}{r}$ and $\al-\frac{1}{q}<\gamma-\frac{1}{r}$. Hence, by using the Besov embedding theorem,
\[
\|D_x\Ga*{f}|_{x_n=0}\|_{B^{\al-\frac{1}{q},\frac{\al}{2}-\frac{1}{2q}}_{q(0)}(\Rn \times {\mathbb R}_+)}\leq c\|D_x\Ga*{f}|_{x_n=0}\|_{B^{\gamma-\frac{1}{r},\frac{\gamma}{2}-\frac{1}{2r}}_{r0}(\Rn \times {\mathbb R}_+)}.\]
As $\gamma>\frac{1}{r}$, the use of the usual trace theorem gives
\begin{align*}
\|D_x\Ga*{f}|_{x_n=0}\|_{B^{\gamma-\frac{1}{r},\frac{\gamma}{2}-\frac{1}{2r}}_{r0}(\Rn \times {\mathbb R}_+)}\leq c\|D_x\Ga*{f}\|_{B^{\gamma,\frac{\gamma}{2}}_{r0}(\Rn \times ({\mathbb R}_+ ))}
\leq c \|{ f}\|_{L^p({\mathbb R}_+;\dot{B}^\be_p(\R))}.
\end{align*}
Hence, the proof of Lemma \ref{propheat2-1} is completed.

\end{document}